\documentclass[a4paper, 10pt ]{article}
\usepackage[a4paper,left=3cm,right=3cm,top=2.5cm,bottom=2.5cm]{geometry}
\usepackage{hyperref}
\hypersetup{colorlinks,linkcolor={blue},citecolor={blue},urlcolor={black}}
\usepackage{subfig}
\usepackage{pgfplots}
\usepackage{pgfplotstable}
\usepackage{mathtools}
\usepackage{multicol}
\usepackage{comment}
\usepackage{booktabs}
\pgfplotsset{compat=1.5}
\usepackage{amssymb}
\usepackage{url}
\usepackage{bm}
\usepackage{pdfsync}
\usepackage{float}
\usepackage{tabularx}
\usepackage{enumerate}
\usepackage{array}
\usepackage{xspace}
\usepackage{tikz}
\usepackage{tikz-cd}
\usepackage{tikzsymbols}
\usetikzlibrary{calc,trees,positioning,arrows,chains,shapes.geometric,%
    decorations.pathreplacing,decorations.pathmorphing,shapes,%
    matrix,shapes.symbols, decorations.markings, patterns,fit}

\usepackage{siunitx}

\usepackage{authblk}

\usepackage[draft,inline,marginclue]{fixme}
\usepackage{mathrsfs}
\usepackage{color, colortbl}
\usepackage{multirow}


\usepackage{amsthm}
\usepackage{amsmath}
\usepackage{stmaryrd}
\usepackage[ruled,vlined,linesnumbered]{algorithm2e}
\usepackage{graphicx}
\usepackage{booktabs}
\usepackage{cleveref}



\usepackage[utf8]{inputenc}
\usepackage{tgpagella}
\usepackage{bm}
\usepackage{amsmath, amsfonts,amssymb}
\usepackage{mathtools}








\hyphenation{So-bo-lev}
\hyphenation{ho-mo-ge-neous}
\hyphenation{spa-ces}


\newtheorem{assumption}{Assumption}

\DeclareMathOperator{\dist}{dist}

\let\epsilon\varepsilon

\let\phi\varphi

\let\theta\vartheta

\newcommand{\spacenameinnorm}{\mathcal{D}}
\newcommand{\spacenameinnormhom}{\mathcal{G}}
\newcommand{\cG}{\mathcal{G}}

\newcommand{\wnormdg}[5]{\left\vert\kern-0.25ex\left\vert\kern-0.25ex\left\vert #1 
      \right\vert\kern-0.25ex\right\vert\kern-0.25ex\right\vert_{\spacenameinnorm^{#3}_{#4}(#5)} }
\newcommand{\wnormdghom}[5]{\left\vert\kern-0.25ex\left\vert\kern-0.25ex\left\vert #1 
      \right\vert\kern-0.25ex\right\vert\kern-0.25ex\right\vert_{\spacenameinnormhom^{#3}_{#4}(#5)} }

\newcommand{\fc}{{\mathfrak{c}}}
\newcommand{\fC}{{\mathfrak{C}}}

\newcommand{\dalpha}{\partial^\alpha}

\newcommand{\alpham}{{|\alpha|}}

\newcommand{\ugamma}{{\underline{\gamma}}}

\newcommand{\ubeta}{{\underline{\beta}}}

\newcommand{\bM}{\mathbf{M}}

\newcommand{\cK}{\mathcal{K}}

\newcommand{\bu}{\bm{u}}
\newcommand{\bv}{\bm{v}}
\newcommand{\bx}{\bm{x}}
\newcommand{\bW}{\bm{W}}
\newcommand{\bn}{\bm{n}}
\newcommand{\bt}{\bm{t}}

\newcommand{\bw}{\bm{w}}

\newcommand{\bbf}{\bm{f}}

\newcommand{\bzero}{{\bm{0}}}

\newcommand{\cX}{\mathcal{X}}

\newcommand{\N}{\mathbb{N}}
\newcommand{\range}[1]{\{1, \dots, #1\}}

\newcommand{\Ccoer}{C_{\mathrm{coer}}}
\newcommand{\Ccont}{C_{\mathrm{cont}}}
\newcommand{\Ccoerh}{C_{{\mathrm{coer}}, h}}
\newcommand{\Cconth}{C_{{\mathrm{cont}}, h}}

\graphicspath{{./figures/}}

\usepackage{tcolorbox}
\tcbuselibrary{most}

\newcommand{\bbP}{\Omega}

\FXRegisterAuthor{gs}{ags}{\color{red}GS}
\FXRegisterAuthor{fr}{afr}{\color{blue}FR}
\FXRegisterAuthor{gr}{agr}{\color{red}GR}
\FXRegisterAuthor{cs}{acs}{\color{blue}S}
\FXRegisterAuthor{fp}{afp}{\color{green}FP}

\newtheorem{theorem}{Theorem}

\theoremstyle{remark}

\newcommand\eqsvd{\mathrel{\stackrel{\makebox[0pt]{\mbox{\normalfont\tiny SVD}}}{=}}}

\newcommand{\x}{\ensuremath{\mathbf{x}}}
\newcommand{\ub}{\ensuremath{\mathbf{u}}}

\newcommand{\R}{\ensuremath{\mathbb{R}}}

\newcolumntype{C}[1]{>{\centering\arraybackslash}m{#1}}

\definecolor{Gray}{gray}{0.9}

\begin{document}

 \title{ROM for Viscous, Incompressible Flow in Polygons --\\
        exponential $n$-width bounds and convergence rate} 

\author[]{Francesco~Romor\footnote{francesco.romor@wias-berlin.de, Weierstrass Institute for Applied Analysis and Stochastics (WIAS), Berlin, Germany},
  Federico~Pichi\footnote{federico.pichi@sissa.it, Mathematics Area, mathLab, SISSA, Trieste,
  Italy},
  Giovanni~Stabile\footnote{giovanni.stabile@santannapisa.it, Sant'Anna School of Advanced Studies, The Biorobotics Institute, Pisa, Italy
  },
  Gianluigi~Rozza\footnote{gianluigi.rozza@sissa.it, Mathematics Area, mathLab, SISSA, Trieste,
    Italy},
  Christoph~Schwab\footnote{christoph.schwab@sam.math.ethz.ch, Seminar for Applied Mathematics, ETH Z\"{u}rich, Z\"{u}rich, Switzerland}}


\maketitle

\begin{abstract}
We demonstrate exponential convergence of 
Reduced Order Model (ROM) approximations for mixed boundary value problems of the 
stationary, incompressible Navier-Stokes equations in plane, polygonal domains $\Omega$.
Admissible boundary conditions comprise mixed BCs, no-slip, slip and 
open boundary conditions, subject to corner-weighted analytic boundary data and volume forcing. 
The small data hypothesis is assumed to ensure existence of a unique weak solution in the sense of Leray-Hopf. 
Recent results on corner-weighted, analytic regularity of velocity and pressure fields in $\Omega$, imply exponential convergence rates of so-called mixed $hp$-Finite Element Methods in $H^1(\Omega)^2\times L^2(\Omega)$ on sequences of geometric partitions of $\Omega$, with corner-refinement. Based on these exponential convergence rate bounds, we infer exponential bounds for the Kolmogorov $n$-widths of solution sets for analytic forcing and boundary data. This implies corresponding exponential convergence rates of POD Galerkin methods that are based on truth solutions which are obtained offline from low-order, divergence stable mixed Finite Element discretizations.
Numerical experiments confirm the exponential rates and the theoretical results.
\end{abstract}


\section{Introduction}
\label{sec:intro}
Model Order Reduction (MOR) strategies have achieved widespread success in mathematics and engineering applications, providing an efficient framework for studying parametrized systems in real-time and many-query contexts \cite{BennerSystemDataDrivenMethods2021,BennerSnapshotBasedMethodsAlgorithms2020}. In particular, significant efforts have been devoted to analyzing incompressible fluid dynamics problems governed by parametrized Partial Differential Equations (PDEs), which are of great interest for both their mathematical implications and industrial applications. The full-order numerical approximation of these problems—that is, the high-fidelity discretization— often entails an unfeasible computational burden.

A common reduction technique exploits the Proper Orthogonal Decomposition (POD), a compression strategy that captures the dominant modes of variation of a problem, to perform a Galerkin projection onto the low-dimensional linear space spanned by these modes.
Greedy methodologies have been developed to address systems with high-dimensional parametric spaces, where an efficient and reliable sampling of the parametric manifold is required.
Both techniques use a linear approximation of the field via a projection onto a low-dimensional space; however, additional assumptions on the parametric dependence are typically needed to enable fast model evaluations.
More specifically, once the parametric PDE is sampled during the offline phase by computing snapshots at different points in the parametric space, affinely dependent forms can be pre-computed so that the online phase, designed for rapid evaluations, is independent of the full-order simulation degrees of freedom (for more details, we refer to~\cite{Hesthaven2016,Quarteroni2016}). Furthermore, such linear reduction techniques assume the parametric solutions set to be well approximated by a low-dimensional linear space, which is not always the case in practice, i.e.\ for moving discontinuities. Indeed, these issues are related to the slow-decay of the so-called \textit{Kolmogorov n-width} of the solution set~\cite{PinkusBook}, representing a measure of how well it can be approximated by low-dimensional linear spaces.

A first MOR approach for incompressible flows has been proposed in~\cite{SirovichTurbulenceDynamicsCoherent1987a}, where a POD based technique was introduced for the computation of coherent structures in the framework of turbulence. In~\cite{Veroy2005}, a Greedy approach has been developed, in combination with a posteriori error estimates, for rapid and rigorous approximation of parametrized Navier-Stokes problems. Such complex scenarios stimulated the discussion towards several important directions: (i) the inf-sup stability of the MOR approach by augmenting the velocity space via supremizers~\cite{RozzaStabilityReducedBasis2007,StabRozzaPODG2018}, (ii) the recovery of the efficiency thanks to hyper-reduction or tensor-assembly approaches when nonlinear terms prevent fast online computations~\cite{BarraultEmpiricalInterpolationMethod2004,ChaturantabutNonlinearModelReduction2010}, and (iii) the lack of uniqueness for the solution in bifurcating context~\cite{HerreroRBReducedBasis2013,DengLoworderModelSuccessive2020a,PichiDeflationbasedCertifiedGreedy2025}.
Recently, in order to properly address more difficult phenomena closer to engineering applications, e.g.\ large 3D fluid dynamics, different Reynolds numbers, turbulent regimes, and complex geometries (see e.g.\ \cite{BruntonMachineLearningFluid2020,VinuesaEnhancingComputationalFluid2022,QuainiBridgingLargeEddy2024,StabileEfficientGeometricalParametrization2020}) researchers in the MOR field are increasingly adopting Scientific Machine Learning (SciML) approaches to develop linear/nonlinear and intrusive/non-intrusive methodologies exploiting data-driven knowledge for fast evaluations, overcoming the classical issues of the previously discussed linear reduction strategies, i.e.\ the affine-decomposition assumption and the slow decay of Kolmogorov $n$-widths.

It is well recognized that many recent SciML methods still use POD decompositions to construct linear ansatz spaces that are then coupled with neural networks~\cite{Fresca2022PODDLROM,PichiArtificialNeuralNetwork2021a,Romor2023Explicable,Cohen2023Nonlinear}. This stands in contrast to truly nonlinear ansatz spaces provided by autoencoders~\cite{LeeModelReductionDynamical2020,MaulikReducedorderModelingAdvectiondominated2021,RomorNonlinearManifoldReducedOrder2023,PichiGraphConvolutionalAutoencoder2024}. Although such methods can be more efficient and easier to implement in a non-intrusive manner, they ultimately inherit the approximation properties of POD basis functions and can be analyzed in terms of the decay of the Kolmogorov $n$-widths of solution sets. Upper bounding these widths for solution sets of planar viscous, incompressible flow in polygons is one purpose of the present paper.


\subsection{Previous Results}
\label{sec:PrevRes}



For a generic parametric PDE, we associate to each parameter in a Banach space $\mu\in\mathcal{P}$ its corresponding solution $L(\mu)\in\cX$, through the parameter-to-solution map $L:\mathcal{P}\rightarrow(\cX, \lVert\cdot\rVert_{\cX})$, where $(\cX, \lVert\cdot\rVert_{\cX})$ is an appropriately chosen Banach space depending on the PDE. The Kolmogorov $n$-width (KnW) of the image set $L(\mathcal{P})\subset\cX$ is defined as 
\begin{equation}
     \begin{split}
     d_n(L(\mathcal{P}), \cX)&=\inf_{\substack{W\subset\cX \\\text{dim}(W)=n}}\sup_{\mu\in \mathcal{P}}\inf_{w\in W} \lVert L(\mu)-w\rVert_{\cX}\\
     &=\inf_{\substack{W\subset\cX \\\text{dim}(W)=n}}\text{dist}(W, L(\mathcal{P}))_{L^{\infty}(\mathcal{P};\cX)}.
     \end{split}
\end{equation}

The efficiency of ROMs in terms of accuracy over reduced dimension $n$ depends on the KnW decay. Some results show algebraic convergence for infinite affine  parametric dependencies~\cite{cohen2011analytic}, 
algebraic and exponential (for finite parameters) convergence for holomorphic solution maps~\cite{cohen2015approximation}. 
Estimates of $n$-widths of classical approximation spaces were given, among others, in~\cite{PinkusBook}.
While for \emph{linear, elliptic PDEs} the isomorphism property of the differential operator implies  KnW bounds for the solution sets in terms of corresponding bounds for the data (e.g.\ \cite{Melenk2000}), for \emph{nonlinear, elliptic PDEs} this needs to be verified in a case-by-case fashion. 
For non-elliptic, linear PDE such as transport-dominated PDEs,
slow KnW decays with the order $n^{-1/2}$ are known,
e.g.\ for linear advection and wave equations with discontinuous initial conditions~\cite{Ohlberger2016Reduced, Arbes2023Kolmogorov}.
\subsection{Contributions}
\label{sec:Contr}
We prove exponential smallness of Kolmogorov $n$-widths of solution
sets of the stationary Navier-Stokes equations (NSE) in a polygonal domain $\bbP$
subject to mixed boundary conditions, including also
open (outflow) boundary.
We admit analytic data (boundary data and
forcing terms) subject to a small data hypothesis, to ensure uniqueness of (Leray-Hopf) solutions.

We exhibit a class of MOR approximation algorithms which 
display, in agreement with the general theory of such methods,
exponential convergence in terms of the number of bases, 
for configurations of engineering interest. We 
precise this empirical observation for the nonlinear, stationary
NSE modelling viscous, incompressible flow
in polygons with a novel, theoretical result.
It is based on recent \emph{weighted, analytic regularity} of velocity
and pressure field established in \cite{HMS21_971}, which imply corresponding exponential $n$-width bounds in $H^1(\Omega)^2 \times L^2(\Omega)$ which we establish here for solution sets of the NSE subject to analytic volume forcing. We verify these mathematical results for two benchmarks of engineering interest: viscous, incompressible flow over a backward-facing step geometry, and viscous, incompressible flow past a cube.

\subsection{Layout of this note}
\label{sec:Layout}
In Section~\ref{sec:VIFPolyg}, we set the notation and specify the 
model problem of stationary, viscous and incompressible flow in a polygon $\Omega$.
We also provide the weak formulation and state known results on existence 
of weak solutions in the sense of Leray-Hopf, 
and uniqueness, subject to the usual small data hypothesis.

Section~\ref{sec:AnReg} reviews analytic regularity of weak solutions, 
subject to an analyticity assumption on the data, in scales of corner-weighted
Sobolev spaces of Kondrat'ev type.

Section~\ref{sec:ExpNWid} then presents the main result, an exponential bound on
the Kolmogorov $n$-widths of solutions, for analytic data as introduced in 
Section~\ref{sec:AnReg}.

Section~\ref{sec:results} presents numerical results, and supplementary 
proofs of the ``high-fidelity'' mixed Finite-Element discretizations which were 
used to generate the ROM subspaces in the reported experiments.

Section~\ref{sec:conclusions} has a summary of the main findings of this note,
and indicates some directions for further work.

\section{Viscous and Incompressible Flow in Polygons}
\label{sec:VIFPolyg}
%
\subsection{Setting and Notation}
\label{sec:SetNot}
We adopt the setting and notation of \cite{HMS21_971}:
$\bbP$ denotes
a polygon with $n\geq 3$ straight, open sides $\Gamma_i$ 
and $n$ corners $\fC = \{\fc_1, \dots, \fc_n\}$ 
with interior opening angles $\omega_i\in (0,2\pi)$, $i=1,2,...,n$ 
(enumerated in counterclockwise order, and modulo $n$, 
i.e.\ we identify $\Gamma_n$ with $\Gamma_0$ and $\Gamma_{n+1}$ with $\Gamma_1$, etc.),
so that $\fc_i = \overline{\Gamma_i} \cap \overline{\Gamma_{i+1}}$.
Let $\Gamma_D$, $\Gamma_N$, and $\Gamma_G$ be a disjoint partition of the
boundary $\Gamma = \partial \bbP$ of $\bbP$
comprising each of $n_D\geq 1$, $n_N\geq 0$ and $n_G\geq 0$
many sides of $\bbP$, respectively,
with $n=n_D+n_N+n_G$.
We denote by $\bn:\Gamma \to \R^2$
the exterior unit normal vector to $\bbP$,
defined almost everywhere on $\Gamma$, which belongs to  $L^\infty(\Gamma;\R^2)$,
and by
$\bt \in L^\infty(\Gamma; \R^2)$ correspondingly the unit tangent vector to $\Gamma$,
pointing in counterclockwise tangential direction.
\subsection{Statement of the boundary value problem}
\label{sec:BVP}
In $\bbP$, we consider the
stationary, incompressible Navier-Stokes equations, which read
\begin{equation}
  \label{eq:NS}
  \begin{alignedat}{2}
    -\nabla\cdot\sigma(\bu, p) + (\bu\cdot\nabla)\bu &= \bbf\quad &&\text{in }\bbP,
    \\
    \nabla\cdot\bu &= 0 \quad&&\text{in }\bbP,
    \\
    \bu & = \bzero \quad &&\text{on } \Gamma_{D},
     \\
    \sigma(\bu, p)\bn & = \bzero \quad &&\text{on } \Gamma_{N},
    \\
    (\sigma(\bu, p)\bn)\cdot \bt  = 0 \text{ and } \bu\cdot\bn&=0 \quad &&\text{on }\Gamma_{G}.
  \end{alignedat}
\end{equation}
We shall refer to the BCs on $\Gamma_{D}$, $\Gamma_N$, and $\Gamma_G$ as
no-slip, open, and slip boundary conditions, respectively.
%
\subsection{Variational Formulation}
\label{sec:VarForm}
Weak solutions of the NSE \eqref{eq:NS} in the sense of Leray-Hopf
satisfy the NSE \eqref{eq:NS} in variational form.
To state it,
we introduce standard Sobolev spaces in $\bbP$.
\emph{Throughout the remainder of this article, we shall work under}
\begin{assumption}\label{ass:Dirich}
[Domain Assumption] 
\newline
The boundary value problem \eqref{eq:NS} satisfies the following conditions:
\begin{enumerate}
\item \label{item:Lip}
$\bbP$ is a bounded, connected polygon with a finite number $n$ of straight sides,
denoted by $\Gamma_i$, $i=1,...,n$,
and with Lipschitz boundary $\Gamma = \partial \bbP$.
\item \label{item:Dirich}
$n_D\geq 1$.
\end{enumerate}
\end{assumption}
Assumption \ref{ass:Dirich} implies 
that all interior opening angles $\omega_i$ at corners $\fc_i$ of $\bbP$ are in $(0, 2\pi)$
and that there is at least one side of $\bbP$ where homogeneous Dirichlet (``no-slip'') boundary
conditions are applied.
As a consequence, $\Gamma = \Gamma_N \cup \Gamma_G$ is excluded in \cite{HMS21_971} and also
from the present analysis.

Furthermore, 
Item \ref{item:Dirich} ensures that the linearization of the Navier-Stokes equations,
i.e., the Stokes problem, admits unique variational velocity field solutions $\bu$,
possibly with pressure $p$ unique up to constants if $\Gamma=\Gamma_D$.

The space of velocity fields of variational solutions
to the Navier-Stokes equations \eqref{eq:NS}
is denoted as
\begin{equation}
\label{eq:DefW}
\bW = \left\{ \bv \in [H^1(\bbP)]^2: \bv = \bzero\text{ on }\Gamma_D, \, \bv\cdot\bn = 0 \text{ on }\Gamma_G\right\}.
\end{equation}
We denote by $\bW^* $ its dual,
with identification of $L^2(\bbP)^2 \simeq [L^2(\bbP)^2]^*$.
We also define $Q = L^2(\bbP)$ if $|\Gamma_D|<|\Gamma|$ (i.e., if not the entire boundary is
a Dirichlet boundary)
and set $Q = L^2_0(\bbP) := L^2(\bbP)/\R$ in the case that $\Gamma = \Gamma_D$.

We are interested in variational solutions $(\bu,p)$ of \eqref{eq:NS}.
To state the corresponding variational
formulation, we introduce the usual bi- and trilinear forms (e.g. \cite{FMRT2001})
\begin{equation}\label{eq:AOB}
\begin{aligned}
{a}(\bu,\bv) 
&\coloneqq 2\nu\int_\bbP \sum_{i,j=1}^2[\epsilon(\bu)]_{ij} [ \epsilon(\bv) ]_{ij}\, d\bx\;,
\\
{b}(\bu,p) 
& :=  
-\int_\bbP p\nabla\cdot \bu \, d\bx\;,
\\
{t}(\bw;\bu,\bv) 
& := 
\int_{\bbP}((\bw\cdot\nabla)\bu)\cdot\bv \, d\bx \;.
\end{aligned}
\end{equation}
With these forms, we state the variational formulation of \eqref{eq:NS}:
find $(\bu,p)\in \bW\times Q$
such that
\begin{equation}\label{eq:NSweak}
\begin{aligned}
{a}(\bu,\bv) + {t}(\bu;\bu,\bv) + {b}(\bv,p) 
& = 
\int_\bbP \bbf\cdot \bv \, d\bx \;, \quad &&\forall \bv\in \bW,
\\
{b}(\bu,q) & =  0 \;, \quad &&\forall q\in Q.
\end{aligned} 
\end{equation}
%
\subsection{Existence and uniqueness of solutions via the small data hypothesis}
\label{sec:NSE-ex}
%

We recapitulate results on existence and uniqueness of variational
solutions of the NSE \eqref{eq:NSweak}.
As is well-known, uniqueness of such solutions in the stationary case
requires a small data hypothesis. To state it,
we introduce the coercivity constant of the viscous (diffusion) term
\begin{equation*}
\Ccoer \coloneqq 
\inf_{\substack{\bv \in \bW\\ \|\bv \|_{H^1(\bbP)} = 1}} 
       2\int_\bbP \sum_{i,j=1}^2[\epsilon(\bv)]_{ij} [ \epsilon(\bv) ]_{ij}\,d\bx,
\end{equation*}
and the continuity constant for the trilinear transport term
\begin{equation*}
\Ccont \coloneqq \sup_{\substack{\bu, \bv, \bw\in \bW 
\\
\|\bu\|_{H^1(\bbP)} =\|\bv\|_{H^1(\bbP)} =\|\bw\|_{H^1(\bbP)} = 1}}
\int_\bbP((\bu \cdot\nabla)\bv)\cdot\bw \, d\bx\; .
\end{equation*}
\begin{assumption}\label{ass:SmData}
[Small Data] 
The forcing term $\bbf$ in \eqref{eq:NS} satisfies the small data assumption
$$
\| \bbf \|_{\bW^*} \leq \frac{\Ccoer^2 \nu^2}{4\Ccont}.
$$
\end{assumption}
The following existence and uniqueness result is then classical, see e.g.\
\cite[Theorem 3.2]{OrltSandig95}.
\begin{theorem}
\label{thm:existence-weak-solution}
{[Existence and Uniqueness of Leray-Hopf solutions for small data]}
  Suppose that Assumption \ref{ass:Dirich} and Assumption~\ref{ass:SmData} hold.

  Then, there exists a solution $(\bu, p)\in \bW \times L^2(\bbP)$ to \eqref{eq:NS} with right-hand side $\bbf$. Moreover, the velocity field $\bu$ is unique in $\bM$, having denoted
  \begin{equation*}
    \bM \coloneqq \left\{ \bv\in\bW: \|\bv\|_{H^1(\bbP)} \leq \frac{\Ccoer \nu}{2\Ccont}\right\}
  \;.
  \end{equation*}
\end{theorem}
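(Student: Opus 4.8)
The plan is to reduce the saddle-point system \eqref{eq:NSweak} to a nonlinear fixed-point equation on the space of divergence-free velocities and to solve it by the Banach contraction principle, with the small-data bound of Assumption~\ref{ass:SmData} supplying both the self-mapping and the contraction property; uniqueness in $\bM$ is then obtained from a direct energy estimate. First I would introduce the closed subspace $\bm{V}\coloneqq\{\bv\in\bW : b(\bv,q)=0 \ \forall q\in Q\}$ of (weakly) divergence-free fields. The second equation of \eqref{eq:NSweak} forces any solution to lie in $\bm{V}$, on which the pressure coupling $b(\bv,p)$ vanishes, so it suffices to find $\bu\in\bm{V}$ with $a(\bu,\bv)+t(\bu;\bu,\bv)=\langle\bbf,\bv\rangle$ for all $\bv\in\bm{V}$. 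The pressure is recovered afterwards: the functional $\bv\mapsto\langle\bbf,\bv\rangle-a(\bu,\bv)-t(\bu;\bu,\bv)$ vanishes on $\bm{V}$, so by surjectivity of the divergence---equivalently the inf-sup (LBB) condition for the Stokes problem guaranteed by Item~\ref{item:Dirich} of Assumption~\ref{ass:Dirich}---there is a unique $p\in Q$ representing it through $b(\cdot,p)$ (unique up to a constant when $\Gamma=\Gamma_D$, whence $Q=L^2_0(\bbP)$).

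The structural fact I would exploit is the skew-symmetry $t(\bw;\bv,\bv)=0$ for $\bw\in\bm{V}$, obtained by integration by parts from $\nabla\cdot\bw=0$ together with $\bw=\bzero$ on $\Gamma_D$ and $\bw\cdot\bn=0$ on $\Gamma_G$. Coercivity of $a$ on $\bW$ with constant $\nu\Ccoer>0$ (Korn plus Poincar\'e, enabled by $n_D\geq 1$) then renders the Oseen form $(\bu,\bv)\mapsto a(\bu,\bv)+t(\bw;\bu,\bv)$ coercive on $\bm{V}$ for each fixed $\bw\in\bm{V}$, so Lax--Milgram defines $T:\bm{V}\to\bm{V}$, $\bu=T(\bw)$. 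Testing the defining equation with $\bv=T(\bw)$ and using skew-symmetry yields $\|T(\bw)\|_{H^1}\leq\|\bbf\|_{\bW^*}/(\nu\Ccoer)$, which Assumption~\ref{ass:SmData} sharpens to $\|T(\bw)\|_{H^1}\leq\Ccoer\nu/(4\Ccont)$; hence $T$ maps the closed ball $B_R\coloneqq\{\bv\in\bm{V}:\|\bv\|_{H^1}\leq R\}$, $R\coloneqq\Ccoer\nu/(2\Ccont)$, into itself. Subtracting the equations for $T(\bw_1)$ and $T(\bw_2)$, setting $\bm{d}\coloneqq T(\bw_1)-T(\bw_2)$ and testing with $\bm{d}$, the skew-symmetry again annihilates the diagonal transport term and leaves $\nu\Ccoer\|\bm{d}\|_{H^1}\leq\Ccont\|T(\bw_2)\|_{H^1}\|\bw_1-\bw_2\|_{H^1}$; the a priori bound on $\|T(\bw_2)\|_{H^1}$ makes the factor at most $1/4$, so $T$ is a contraction on $B_R$ and the Banach fixed-point theorem produces a unique $\bu\in B_R$.

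For the uniqueness statement I would argue directly on $\bM$. If $(\bu_1,p_1)$ and $(\bu_2,p_2)$ both solve \eqref{eq:NSweak} with $\bu_1,\bu_2\in\bM$, then $\bm{d}\coloneqq\bu_1-\bu_2\in\bm{V}$; subtracting the two momentum equations, testing with $\bv=\bm{d}$ (the pressure terms vanishing on $\bm{V}$), and writing $t(\bu_1;\bu_1,\bm{d})-t(\bu_2;\bu_2,\bm{d})=t(\bu_1;\bm{d},\bm{d})+t(\bm{d};\bu_2,\bm{d})$ gives, after the skew-symmetric term drops, $a(\bm{d},\bm{d})=-t(\bm{d};\bu_2,\bm{d})$. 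Coercivity together with the membership $\|\bu_2\|_{H^1}\leq R=\Ccoer\nu/(2\Ccont)$ then force $\nu\Ccoer\|\bm{d}\|_{H^1}^2\leq\Ccont\|\bu_2\|_{H^1}\|\bm{d}\|_{H^1}^2\leq\tfrac12\nu\Ccoer\|\bm{d}\|_{H^1}^2$, so $\bm{d}=\bzero$ and the velocity field is unique in $\bM$.

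The main obstacle is the skew-symmetry of $t$ on $\bm{V}$ in the presence of the open boundary $\Gamma_N$: there $\bw\cdot\bn$ is unconstrained, so the integration by parts leaves a boundary contribution $\tfrac12\int_{\Gamma_N}|\bv|^2\,(\bw\cdot\bn)\,ds$ that need not vanish. Controlling this term is the delicate point---either by absorbing it through a trace estimate into the same small-data budget, or by replacing $t$ with its skew-symmetrization in the weak formulation as in \cite{OrltSandig95}---and once skew-symmetry (or a quantitative surrogate) is secured, the coercivity, contraction, and uniqueness estimates above go through as stated. As an existence proof that does not require the contraction factor to lie strictly below one, I would alternatively run a Galerkin scheme on finite-dimensional subspaces of $\bm{V}$, deduce solvability of the discrete nonlinear system from the a priori bound via Brouwer's theorem, and pass to the limit using the compact embedding $H^1(\bbP)\hookrightarrow L^4(\bbP)$ in two dimensions to handle the nonlinear term.
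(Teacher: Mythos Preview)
The paper does not prove this theorem: it is stated as a classical result with the sentence ``The following existence and uniqueness result is then classical, see e.g.\ \cite[Theorem 3.2]{OrltSandig95}'' and no further argument is given. Your proposal is a faithful reconstruction of the standard fixed-point/energy argument behind that citation, and you correctly isolate the one genuine subtlety in the present setting, namely the loss of skew-symmetry of $t$ on $\bm{V}$ when $\Gamma_N\neq\emptyset$; your proposed remedies (skew-symmetrization of the convective term, or absorbing the outflow boundary integral into the smallness budget) are precisely the devices used in \cite{OrltSandig95}, which is also the reference the paper invokes.
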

As we assumed above $n_D \geq 1$, there is always at least one side of $\bbP$
where homogeneous Dirichlet (``no-slip'') BCs are imposed.

\section{Analytic Regularity}
\label{sec:AnReg}
We discuss the analytic regularity of solutions. 
As the linearization of the NSE \eqref{eq:AOB}, the Stokes equations, 
with the boundary operators in \eqref{eq:NS} 
are
strongly elliptic in the sense of Agmon-Douglis-Nirenberg, 
it is classical that they satisfy local elliptic regularity shifts
scales of Sobolev spaces, which hold up to smooth parts of the boundary $\Gamma$.

It is also known that these regularity results fail in the vicinity of corners,
where corresponding shift theorems require 
\emph{corner-weighted Sobolev spaces due to V.A. Kondrat'ev}, see, e.g.~\cite{Mazya2010} and the references there.

We recapitulate recent results on \emph{analytic regularity shifts} 
from \cite{HMS21_971} in analytic scales of corner-weighted Sobolev spaces.
The characterization of solutions in these corner-weighted Sobolev spaces
underpins the proof of exponential $n$-width bounds 
and thus, in turn, convergence rate estimates of the MOR/ROM.
\subsection{Corner-weighted Sobolev spaces}
\label{sec:WgtSpc}
We introduce in a polygon $\bbP$ as defined in Section~\ref{sec:SetNot} analytic,
corner-weighted Sobolev spaces for the velocity field $\bu$ and the pressure field $p$.
As it is well-known from \cite{Mazya2010},
in these scales of spaces, 
for certain ranges of the weight exponents,
for the NSE 
there hold elliptic regularity shifts of any finite order,
and also analytic regularity shifts~\cite{HMS21_971}.
We now state these, as they are required in the proof 
of exponential $n$-width bounds on solutions of the NSE 
\eqref{eq:NSweak}.
\subsubsection{Finite Order Spaces}
\label{sec:FinOrdSpc}
For $x\in \bbP$ and $i\in\range{n}$,
let $r_i(x) \coloneqq \dist(x, \fc_i)$.
We define the \emph{corner weight function}
\begin{equation*}
  \Phi_{\ubeta} (x)\coloneqq \prod_{i=1}^nr_i^{\beta_i}(x).
\end{equation*}
In $\bbP$, for $j, k\in \N_0$ and $\ugamma \in\R^n$,
we introduce so-called \emph{non-homogeneous, corner-weighted Sobolev norms}.
They are, for $\ell\in \N_0$, $k\in\N$ with $k> \ell$, and $\ubeta\in \R^n$
given by
\begin{equation}
  \label{eq:Hspace}
\|v \|^2_{H^{k, \ell}_{\ubeta}(\bbP)} 
\coloneqq 
\| v\|_{H^{\ell-1}(\bbP)}^2 
+ 
\sum_{\ell\leq \alpham \leq k} \| \Phi_{\ubeta + \alpham -\ell} \dalpha v\|_{L^2(\bbP)}^2,
\end{equation}
with the convention that the first term is omitted when $\ell=0$.
%
\subsubsection{Corner-weighted Analytic Classes $B^\ell_{\ubeta}(\bbP)$}
\label{sec:AnRegSpc}
With the (non-homogeneous) 
corner-weighed spaces $H^{k, \ell}_{\ubeta}(\bbP)$, 
we define corner-weighted analytic classes in the usual way. 
For $\ell = 0,1,2$, 
we denote
\begin{equation}
  \label{eq:Bspace}
\begin{split}
  B^\ell_{\ubeta}(\bbP)\coloneqq \Bigg\{ v\in \bigcap_{k\geq \ell}H^{k,\ell}_{\ubeta}(\bbP):&
     \exists C, A>0 \;\text{such that} \\
     & \| \Phi_{\ubeta + \alpham -\ell} \dalpha v\|_{L^2(\bbP)}
       \leq
     CA^{\alpham-\ell}(\alpham-\ell)!, \, \forall \alpham \geq \ell\Bigg\} 
     \;.
\end{split}
\end{equation}
These corner-weighted, analytic classes were introduced by 
I. Babu\v{s}ka and B.Q. Guo in the 80ies 
(see \cite{Babuska1988Reg,Babuska1988}) 
in order to prove the exponential convergence rate of the so-called 
$hp$-version of the Finite Element Method in polygonal domains $\bbP$. 
Their relevance for the presently considered boundary value problems
of viscous, incompressible flow in polygonal domains is discussed
next.
\subsection{Analytic Regularity Result}
\label{sec:AnRegRes}
With these preparations, we are now in position to state 
the main result \cite[Theorem 2.13]{HMS21_971} 
on analytic regularity of Leray-Hopf solutions of the NSE in the polygon
$\bbP$ subject to analytic data. 
It generalizes earlier results \cite{Guo2006a} 
for the linearized Stokes problem.

\begin{theorem}
\label{th:analytic-u}
{[Analytic Regularity of velocity and pressure in corner-weighted spaces]}
With the notation from Section~\ref{sec:SetNot},
let $\ubeta=(\beta_1,\dots,\beta_n) \in (0,1)^n$ be such that around each
corner $\fc_i$ for $i=1,...,n$, $\beta_i\in(1-\kappa_i,1)\cap(0,1)$ where
$\kappa_i$ is defined as in \cite[Eqn.(2.19)]{HMS21_971}
with respect to corner $\fc_i$, 
in the interval $I=(0,\omega_i)$, 
and to the operator pencil  
$\mathcal{A}_i(\lambda)$
for the linearized (Stokes) boundary value problem  
as defined in \cite[Eqn.~(2.18)]{HMS21_971}.

Suppose that Assumption \ref{ass:Dirich} holds and let $(\bu,p) \in \bW\times Q$ be the weak solution to \eqref{eq:NSweak} with right-hand side $\bbf$.
Suppose also that the data is analytic in the sense that
$\bbf \in [ B^0_{\ubeta}(\bbP) ]^2\cap \bW^{*}$ holds, 
and that $\bbf$ is small, i.e., Assumption \ref{ass:SmData} holds.

Then there holds the analytic regularity shift
\begin{equation*}
(\bu , p) \in 
[B^2_{\ubeta}(\bbP)]^2 \times B^1_{\ubeta}(\bbP).
\end{equation*}
\end{theorem}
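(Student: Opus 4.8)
The plan is to regard the convective term as a right-hand side and to bootstrap the \emph{linear} analytic regularity of the Stokes operator, closing the argument by a self-improving a priori estimate in the corner-weighted analytic scale. Since Theorem~\ref{thm:existence-weak-solution} already furnishes a weak solution $(\bu,p)\in\bW\times Q$ that is unique in the ball $\bM$, I would not reprove existence; instead I would show that this \emph{given} solution lies in $[B^2_{\ubeta}(\bbP)]^2\times B^1_{\ubeta}(\bbP)$. The key structural input is the linear result of \cite{Guo2006a}: for $\ubeta$ chosen so that each $\beta_i\in(1-\kappa_i,1)$, the operator pencil $\mathcal{A}_i(\lambda)$ carries no eigenvalue on the Mellin line fixed by $\beta_i$, whence the Stokes operator with the mixed boundary conditions of \eqref{eq:NS} is an isomorphism in the weighted analytic scale, mapping data in $[B^0_{\ubeta}(\bbP)]^2$ to solutions in $[B^2_{\ubeta}(\bbP)]^2\times B^1_{\ubeta}(\bbP)$.

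First I would establish finite-order weighted regularity. Writing \eqref{eq:NSweak} as a Stokes problem with data $\tbf\coloneqq\bbf-(\bu\cdot\nabla)\bu$, I would argue by induction on $k$ that $(\bu,p)\in[H^{k,2}_{\ubeta}(\bbP)]^2\times H^{k,1}_{\ubeta}(\bbP)$: at each step the current regularity of $\bu$ yields, via a multiplication (Leibniz) estimate in the Kondrat'ev scale, that $(\bu\cdot\nabla)\bu$ belongs to $H^{k-2,0}_{\ubeta}(\bbP)$, and the finite-order linear shift theorem then promotes $(\bu,p)$ by two orders. The base case $k=2$ uses $\bu\in H^1$ together with a Sobolev product estimate to place the convection term in $[H^{0,0}_{\ubeta}(\bbP)]^2$. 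The weight bookkeeping here is the crux: the multiplicative structure $\Phi_{\ubeta}=\prod_i r_i^{\beta_i}$ must absorb the two free derivatives of the $B^2$-scale into the $B^0$-scale of the product, which constrains the admissible range of $\ubeta$ to exactly $(1-\kappa_i,1)$.

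To pass from finite to analytic regularity I would run a second induction producing the exponential bounds $\|\Phi_{\ubeta+\alpham-\ell}\dalpha\bu\|_{L^2(\bbP)}\le CA^{\alpham-\ell}(\alpham-\ell)!$. Applying a differentiated form of the linear a priori estimate, $\dalpha\bu$ of order $\alpham$ is controlled by weighted norms of $\dalpha\tbf$; expanding $\dalpha[(\bu\cdot\nabla)\bu]$ by Leibniz produces a convolution sum $\sum_{\gamma\le\alpha}\binom{\alpha}{\gamma}(\partial^\gamma\bu)(\partial^{\alpha-\gamma}\nabla\bu)$, and the inductive hypothesis turns this into a sum of products of factorials. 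The decisive technical lemma is that convolutions of the sequence $(A^m m!)_m$ remain of the same form with a controlled enlargement of $A$ and $C$, so that the exponential-factorial Ansatz reproduces itself. Combined with the analyticity of the data $\bbf\in[B^0_{\ubeta}(\bbP)]^2$, this closes the induction and yields $\bu\in[B^2_{\ubeta}(\bbP)]^2$ and, from the pressure component of the isomorphism, $p\in B^1_{\ubeta}(\bbP)$.

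The main obstacle is the uniform propagation of the analytic constants $C$ and $A$ through the nonlinear induction: one must verify that the combinatorial factors from the Leibniz rule, the weight shifts $\Phi_{\ubeta+\alpham-\ell}$ activated at each corner, and the loss incurred in the linear a priori estimate do not accumulate into super-factorial growth. The small-data hypothesis (Assumption~\ref{ass:SmData}) enters precisely here: it guarantees that the contraction constant of the Stokes-solve-composed-with-nonlinearity map stays below one on the analytic ball, so that the self-improving estimate stabilizes at a finite $A$ rather than diverging. I expect the weighted multiplication estimate, together with the bookkeeping of which corner exponents $\beta_i$ are charged by a given multi-index $\alpha$, to be the most delicate part of the argument.
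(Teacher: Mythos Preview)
The paper does not supply its own proof of this theorem: it is quoted verbatim as \cite[Theorem~2.13]{HMS21_971}, so there is no in-paper argument to compare against. Your outline is, however, exactly the strategy that \cite{HMS21_971} pursues (bootstrap the Stokes shift with the convection term on the right, finite-order induction first, then factorial induction via Leibniz and a weighted product estimate), so in spirit you are reproducing the cited proof rather than offering an alternative.

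One point deserves correction. You write that the small-data hypothesis ``enters precisely'' in the analytic induction to keep a contraction constant below one. That is not how the argument in \cite{HMS21_971} (or in the analogous semilinear analytic-regularity literature) is organized: once a weak solution $(\bu,p)\in\bW\times Q$ is in hand, the bootstrap is driven by the \emph{quadratic} structure of $(\bu\cdot\nabla)\bu$ and the Cauchy-product combinatorics of factorial sequences, not by smallness of $\bu$. Smallness is used upstream, in Theorem~\ref{thm:existence-weak-solution}, to guarantee that a unique Leray--Hopf solution exists and lies in the ball $\bM$; the regularity shift then applies to that given solution. If your induction genuinely required a contraction on an analytic ball, you would be running a fixed-point argument in $B^2_{\ubeta}$ rather than a regularity bootstrap, and you would need to re-verify existence in that stronger topology---a different and harder route than what is actually done.
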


\section{Exponential Bounds of $n$-widths of solutions}
\label{sec:ExpNWid}

The previous analytic regularity results 
for the velocity and the pressure
imply exponential bounds on the 
Kolmogorov $n$-widths of the Leray solutions, 
in $H^1(\Omega)^2\times L^2(\Omega)$, 
which we will now develop. 
As it is well-known (see e.g.~\cite{CSMSuri99,DSThWhpDGStok})
exponential approximation rates in terms of the dimension
of subspaces (``number of degrees of freedom'')
are afforded in $H^1(\bbP)$ and in $L^2(\bbP)$ 
on bounded subsets of 
$ B^2_{\ubeta}(\bbP) \subset H^1(\bbP)$ 
and of 
$ B^1_{\ubeta}(\bbP) \subset L^2(\bbP)$, 
respectively.
These exponential approximate rate bounds are realized
by the so-called $hp$-version of the Finite Element Method.

The definition of the KnW of the 
compact (in $H^1(\bbP)^2 \times L^2(\bbP)$) solution set
\begin{equation}\label{eq:K}
\cK 
:= 
B^2_{\ubeta}(\bbP)^2 \times B^1_{\ubeta}(\bbP) \subset H^1(\bbP)^2 \times L^2(\bbP),
\end{equation}
then implies corresponding bounds for the 
KnW of $\cK$ in $H^1(\bbP)^2 \times L^2(\bbP)$ 
and thus also for the MOR approximations.
\begin{theorem}\label{thm:dnK}
{[Exponential KnW bound of solution set for analytic data]}
For the polygon $\bbP$, and for the NSE \eqref{eq:NSweak} under the conditions
Assumption~\ref{ass:Dirich} and the small data assumption
in Theorem~\ref{th:analytic-u}, in particular with data $f\in B^0_\beta(\bbP)^2$,
the corresponding set of solutions belongs to $\cK$ in \eqref{eq:K}.

There exist constants $b,C > 0$ such that for all $n\geq 1$ holds
$$
d_n(\cK, H^1(\bbP)^2 \times L^2(\bbP)) 
\leq 
C\exp(-b n^{1/3}) 
\;.
$$
\end{theorem}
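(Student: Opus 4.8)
The plan is to bound the Kolmogorov $n$-width from above by exhibiting a single, concrete family of $n$-dimensional subspaces that approximates every element of $\cK$ at the required exponential rate, and then to invoke the defining infimum in the KnW. The natural candidates are the $hp$-Finite Element spaces on geometrically corner-refined partitions of $\bbP$, for which exponential approximation of functions in the weighted analytic classes $B^\ell_{\ubeta}(\bbP)$ is classical (Babu\v{s}ka--Guo \cite{Babuska1988Reg,Babuska1988}, and \cite{CSMSuri99,DSThWhpDGStok}). Since $\cK = B^2_{\ubeta}(\bbP)^2\times B^1_{\ubeta}(\bbP)$ is a Cartesian product, and the ambient norm on $H^1(\bbP)^2\times L^2(\bbP)$ splits accordingly, it suffices to approximate each of the two velocity components in $H^1(\bbP)$ and the pressure in $L^2(\bbP)$ separately and to combine the resulting subspaces.

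First I would fix a geometric mesh on $\bbP$ with $L$ refinement layers towards each corner $\fc_i$ and an elementwise polynomial degree distribution increasing linearly away from the corners, with maximal degree $p$, taking $L \simeq p$. Let $X_p\subset H^1(\bbP)$ denote the associated $hp$ space. The classical exponential $hp$-approximation result then provides, for every $v$ in a bounded subset of $B^2_{\ubeta}(\bbP)$, an element $\Pi_p v \in X_p$ with
\[
\| v - \Pi_p v\|_{H^1(\bbP)} \le c\,\exp(-b\, p),
\]
where $c,b>0$ depend only on $\bbP$, on $\ubeta$, and on the analytic-class constants in \eqref{eq:Bspace}, but not on $v$ or on $p$. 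An analogous statement holds for the pressure, approximating any $q$ in a bounded subset of $B^1_{\ubeta}(\bbP)$ by $\widetilde{\Pi}_p q$ in an $hp$ space $Y_p\subset L^2(\bbP)$ with $L^2(\bbP)$-error $c\,\exp(-b\,p)$. The crucial dimension count is that, on the two-dimensional corner-geometric mesh, the number of elements is $O(L)=O(p)$ and each carries $O(p^2)$ local degrees of freedom, so that $N_p := \dim X_p \simeq \dim Y_p \simeq p^3$; equivalently $p \simeq N_p^{1/3}$, whence both errors are bounded by $c\,\exp(-b\, N_p^{1/3})$.

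Then I would assemble the trial subspace $W_n := X_p\times X_p\times Y_p\subset H^1(\bbP)^2\times L^2(\bbP)$ of dimension $n = 2\dim X_p + \dim Y_p \simeq p^3$. For any $(\bu,p)\in\cK$, writing $\bu = (u_1,u_2)$, the product-norm error obeys
\[
\bigl\|(\bu,p) - (\Pi_p u_1, \Pi_p u_2, \widetilde{\Pi}_p p)\bigr\|_{H^1(\bbP)^2\times L^2(\bbP)} \le c\,\exp(-b\, p) \le C\exp(-b'\, n^{1/3}),
\]
with constants uniform over $\cK$. Taking the supremum over $(\bu,p)\in\cK$ and using $W_n$ as an admissible competitor in the infimum defining $d_n(\cK, H^1(\bbP)^2\times L^2(\bbP))$ yields the asserted bound along the sequence $n=n(p)$; for intermediate $n$ one simply uses the subspace of the largest admissible $p$, which affects only the constants. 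Note that, since the KnW concerns linear approximation rather than discrete solvability, no inf-sup compatibility between the velocity and pressure spaces is required here: the two fields are approximated independently.

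The step needing the most care is the \emph{uniformity} of the exponential $hp$-estimate over the whole set $\cK$, i.e.\ that $c,b$ are controlled solely through the analytic-class data of \eqref{eq:Bspace} and are independent of the particular solution. This is precisely where the weighted analytic regularity of Theorem~\ref{th:analytic-u} enters, furnishing such uniform class membership $(\bu,p)\in B^2_{\ubeta}(\bbP)^2\times B^1_{\ubeta}(\bbP)$ for all solutions arising from data in a bounded subset of $B^0_{\ubeta}(\bbP)^2$ under the small-data hypothesis. A secondary technical point is the bookkeeping of the dimension-versus-degree relation $N_p\simeq p^3$ on the geometric mesh and the passage from the discrete sequence $\{n(p)\}$ to all $n\ge 1$; both are standard in the $hp$ literature and affect only the values of $C$ and $b$.
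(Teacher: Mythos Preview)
Your proposal is correct and follows essentially the same approach as the paper's proof: bound the KnW from above by exhibiting $hp$-FEM subspaces on corner-geometric meshes and invoke known exponential approximation results for the weighted analytic classes $B^2_{\ubeta}$ (velocity in $H^1$) and $B^1_{\ubeta}$ (pressure in $L^2$), with the $n^{1/3}$ exponent arising from the dimension count $N_p \simeq p^3$ in two space dimensions. The paper cites \cite{FS20_2675} and \cite{DSThWhpDGStok} for the velocity and pressure approximation bounds, respectively, and additionally remarks that, because its KnW definition \eqref{eq:KnWBd} restricts the velocity subspace to $\bW$, the cited $hp$-result (stated for homogeneous Dirichlet data) must be checked to cover the mixed boundary conditions on $\Gamma_D$ and $\Gamma_G$---a point you do not address but which is handled by inspection of the proof in \cite{FS20_2675}.
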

\begin{proof}
The proof will be obtained by an upper bound on the KnW 
of the corner-weighted, analytic solution set $\cK$ in \eqref{eq:K}
of the NSE furnished by Theorem~\ref{th:analytic-u}. 
We recall the definition (see, e.g., \cite{PinkusBook})
\begin{align}\label{eq:KnWBd}
d_n(\cK, &H^1(\bbP)^2 \times L^2(\bbP))
=\\
&\inf_{\mathbf{V}_n \times Q_n \subset  \bW\times Q} 
\sup_{(\bu,p) \in \cK}
\inf_{(\bv_n,q_n) \in \mathbf{V}_n\times Q_n} 
\| \bu - \bv_n \|_{H^1(\bbP)^2} 
+
\| p   -  q_n  \|_{L^2(\bbP)} \nonumber
\;.
\end{align}
Here, the first infimum is over all pairs $(\mathbf{V}_n, Q_n)$ 
of subspaces of dimension at most $n$ of 
$\bW$ and of $L^2(\bbP)$, respectively.
The supremum is taken over the set $\cK$ defined in \eqref{eq:K}
which is a compact subset of $\bW\times Q$.

Due to Theorem~\ref{th:analytic-u}, the unique (under the small data
hypothesis in Theorem~\ref{thm:existence-weak-solution}) 
solution $(\bu,p)\in \bM \times Q$ of the NSE boundary value problem \eqref{eq:NSweak} 
belongs to $\cK \cap (\bM\times Q)$.

The second infimum in the definition \eqref{eq:KnWBd} 
of the KnW $d_n(\cK, H^1(\bbP)^2 \times L^2(\bbP))$
is taken over all pairs $(\bv_n,q)$ in subspaces
$\mathbf{V}_n \subset \bW$ and $Q_n\subset L^2(\bbP)$ 
of dimension at most $n$.

We prove \eqref{eq:KnWBd} by 
majorizing $d_n(\cK, H^1(\bbP)^2 \times L^2(\bbP))$
with 
$$
d_n(\cK, H^1(\bbP)^2 \times L^2(\bbP))
\leq 
\sup_{(\bu,p) \in \cK}
\inf_{(\bv_n,q_n) \in \mathbf{V}^{hp}_n \times Q^{hp}_n} 
\| \bu - \bv_n \|_{H^1(\bbP)^2} 
+
\| p   -  q_n  \|_{L^2(\bbP)} 
\;.
$$
Here, 
we will use specific subspaces 
$\mathbf{V}^{hp}_n\subset \bW \subset H^1(\bbP)^2$ 
of continuous, piecewise polynomial velocity
fields 
and 
$Q^{hp}_n\subset L^2(\bbP)$ 
of discontinuous, piecewise polynomial pressure fields
on sequences of corner-refined, geometric partitions 
of $\bbP$ from the so-called $hp$-Finite Element Methods.

With these spaces in hand, we leverage known exponential
approximation rate bounds from \cite{FS20_2675} 
to bound the error in 
velocity approximation,
$\| \bu - \bv_n \|_{H^1(\bbP)^2}$,
and, from \cite{DSThWhpDGStok} for the 
pressure approximation error 
$\| p   -  q_n  \|_{L^2(\bbP)}$.

Specifically, in \cite[Theorem~1]{FS20_2675}, upon noting that 
the corner-weighted classes $\cG^\delta_{\ubeta}$ in \cite{FS20_2675}
coincide, for $\delta = 1$, with $\bu \in  B^2_{\ubeta}(\bbP)^2$ 
in the present paper, 
there holds with $d=2$ the exponential bound 
$$
\inf_{\bv_n \in \mathbf{V}^{hp}_n} \| \bu - \bv_n \|_{H^1(\bbP)^2} 
\leq C\exp( - b n^{1/3}) 
\;,
$$
with some constants $b,C>0$ which are independent of $n$.

We remark that in \cite[Theorem~1]{FS20_2675} only homogeneous
Dirichlet BCs on all of $\partial\bbP$ were considered. 
Inspecting the proof of  \cite[Theorem~1]{FS20_2675} however,
also homogeneous Dirichlet BCs on sides of $\bbP$ for each component
of $\bu, \bv_n$ as arise in \eqref{eq:NS} on $\Gamma_D$ and on $\Gamma_G$ 
are covered.

For the corresponding pressure approximation result, we observe that 
the regularity $p\in B^1_{\ubeta}(\bbP)$ furnished by Theorem~\ref{th:analytic-u}
implies exponential convergence for $hp$-spaces $Q^{hp}_n\subset L^2(\bbP)$, 
according to \cite[Theorem~3]{DSThWhpDGStok}. 

The geometric mesh constructions 
in the proofs of \cite[Theorem~1]{FS20_2675} and \cite[Theorem~3]{DSThWhpDGStok}
differ. 
This is inconsequential here, as the partitions are only used to establish the 
\emph{exponential consistency bounds} 
for approximations of $\bu$ and of $p$ from 
\emph{some} $n$-dimensional subspaces $\mathbf{V}_n\subset \bW$
and $Q_n\subset L^2(\bbP)$.
\end{proof}

\section{Numerical Results}
\label{sec:results}
The aim of our numerical experiments is to validate the exponential bound $\exp(-bn^{1/3})$ in Theorem~\ref{thm:dnK}, showing that there exist specific parametrization spaces $\mathcal{P}$ for the source terms $\bbf\in \mathcal{P}\subset [B^0_{\ubeta}(\bbP) ]^2\cap \bW^{*}$ in~\ref{eq:NS} that attain a KnW decay close to such exponential decay. 

We will consider the test cases \textit{flow over a backward-facing step} and \textit{flow past a cube in a channel} 
with Dirichlet homogeneous boundary conditions in subsection~\ref{subsec:dir}, 
with mixed Dirichlet-Neumann homogeneous boundary conditions in subsection~\ref{subsec:mixed} 
and with mixed  Dirichlet-Neumann homogeneous and slip boundary conditions in subsection~\ref{subsec:mixed_slip}.

As parametrization spaces $\mathcal{P}=\mathcal{P}_N \subset[ B^0_{\ubeta}(\bbP) ]^2\cap \bW^{*}$, we will consider $N$ dimensional subsets of the set of eigenfunctions of the Dirichlet Laplacian, for varying $N\in\mathbb{N}$.
Being the eigenfunctions scalar fields in $\Omega$, 
we set the $x$-component of $\bbf_i\in\mathcal{P}_{N}$ with $i\in[1, N]\subset\mathbb{N}$ 
equal to the $i$-th eigenfunction, while the $y$-component is set to zero. 
The source terms are also properly scaled such that the small data hypothesis is satisfied, i.e.\:
\begin{equation*}
  \begin{split}
  \mathcal{P}_{N}=\Bigg\{\bbf_{i}=(f_i, 0)\in H^1_0(\Omega)^2,\ &i\in\{1,\dots, N\}:\\\ -\Delta f_i = \lambda_i f_i, \;\;& \mbox{in}\;\; \Omega, \; f_i|_{\partial \Omega} = 0,\ \text{and}\ \| \bbf_i\|_{\bW^*}\leq \nu^2 \frac{\Ccoer^2}{4\Ccont}\Bigg\}\;.
  \end{split}
\end{equation*}
We remark that, in order to apply Theorem~\ref{thm:dnK}, we need the validity of the assumption $\mathcal{P}_{N} \subset[ B^0_{\ubeta}(\bbP) ]^2\cap \bW^{*}$ from Theorem~\ref{th:analytic-u}.
This can be proved exploiting that the solution $U(x)$ to the Poisson problem 
\begin{equation*}
  \begin{alignedat}{2}
    -\Delta U &= 1 \quad &&\text{in }\bbP,
    \\
    U & = 0 \quad &&\text{on } \partial\bbP,
  \end{alignedat}
\end{equation*}
belongs to $B^0_{\ubeta}(\bbP)$ \cite{Babuska1988Reg}, and that the $i$-th eigenfunction associated to the eigenvalue $\lambda_i$ of the Dirichlet Laplacian in $\Omega$ can be bounded by $U(x)$:
\[
  |f_i(x)| \leq \lambda_i \lVert f_i\rVert_{\infty} U(x),\quad\forall x\in\Omega,
\]
thanks to a result from~\cite{Moler1968Bounds}, reported in Section 6.2.1 in~\cite{Grebenkov2013Geometrical}.

The chosen parameter space $\mathcal{P}_{N}$ is not commonly used in engineering applications. Typical parameters include the viscosity, the magnitude of the inflow as non-homogeneous Dirichlet boundary condition and parameters that affect the deformation of the computational domain in the context of shape optimization. However, preliminary numerical investigations showed that choosing the viscosity or the inflow magnitude as varying parameters in an interval resulted in a KnW decay faster than $\exp(-bn^{1/3})$. This can be seen in the worked out problem 12 Navier-Stokes system for a backward-facing step in~\cite{Rozza2024} reduced with the POD-Galerkin and the Discrete Empirical Interpolation Methods~\cite{ChaturantabutNonlinearModelReduction2010}.
\subsection{Discretization with divergence-stable, mixed conforming Finite Elements}
\label{subsec:discreteINSFEM}
In the numerical experiments, 
we consider the incompressible Navier-Stokes (INS) equations with homogeneous Dirichlet boundary conditions (subsection~\ref{subsec:dir}), 
homogeneous mixed Dirichlet-Neumann boundary conditions (subsection~\ref{subsec:mixed}),   
or mixed Dirichlet-Neumann homogeneous and slip boundary conditions (subsection~\ref{subsec:mixed_slip}), with $\bbf\in\mathcal{P}_N\subset[ B^0_{\ubeta}(\bbP) ]^2\cap \bW^{*}$:
\begin{equation}
  \label{eq:INS}
  \begin{alignedat}{2}
    -\nabla\cdot\sigma(\bu, p) + (\bu\cdot\nabla)\bu &= \bbf\quad &&\text{in }\bbP,
    \\
    \nabla\cdot\bu &= 0 \quad&&\text{in }\bbP,
    \\
    \bu & = \bzero \quad &&\text{on } \Gamma_{D},
     \\
    \sigma(\bu, p)\bn & = \bzero \quad &&\text{on } \Gamma_{N},
    \\
    (\sigma(\bu, p)\bn)\cdot \bt  = 0 \text{ and } \bu\cdot\bn&=0 \quad &&\text{on }\Gamma_{G},
  \end{alignedat}
\end{equation}
the mixed variational FE discretization is based on the following variational formulation: find $(\bu, p)\in\bW\times Q$ that satisfy $\forall(\bv, q)\in\bW\times Q$
\begin{align*}
  \nu\int_{\Omega}\nabla \bu \cdot \nabla \bv d\x - \int_{\Omega} p\ \nabla\cdot\bv d\x + \int_{\Omega}q\nabla\cdot\bu d\x + \int_{\Omega}\left(\left(\nabla\bu\right)\bu\right)\cdot\bv d\x-\int_{\Omega}\mathbf{f}\cdot\bv d\x = 0,
\end{align*}
with
\begin{equation*}
  \bW = \left\{ \bv \in [H^1(\bbP)]^2: \bv = \bzero\text{ on }\Gamma_D,\quad \bv\cdot\mathbf{n} = \bzero\text{ on }\Gamma_G\right\}, \quad Q=L^2(\Omega).
\end{equation*}
We discretize this system of PDEs with the LBB-stable Lagrangian finite element spaces
$\ub_h\in[\mathbb{P}^2(\mathcal{T}_h)]^2$ and
$p_h\in\mathbb{P}^{1}(\mathcal{T}_h)$, 
where $\mathcal{T}_h$ is a conforming, shape-regular triangulation. We define $\mathcal{P}_{N,h}$ as the discrete counterpart of the parameter space $\mathcal{P}_N$:
\begin{equation}
  \begin{split}
  \label{eq:fhspace}
  \mathcal{P}_{N, h}=\Bigg\{\bbf_{i, h}=(f_{i,h}, 0)\in [\mathbb{P}^2(\mathcal{T}_h)]^2\ :\ &\int_{\Omega}\nabla f_{i,h}\cdot\nabla g_{h} = \lambda_{n, h}\int_{\Omega} f_{i,h}\cdot\nabla g_{h},\\\; &\forall g_{h}\in \mathbb{P}^2(\mathcal{T}_h),\ i\in\{1,\dots, N\}\Bigg\}\;,
  \end{split}
\end{equation}
satisfying the additional small data hypothesis,$\ \| \bbf_{i, h}\|_{\bW^*}\leq \nu^2 \Ccoerh^2/4\Cconth$, with discrete coercivity and continuity constant
\begin{align*}
  \Ccoerh &\coloneqq 
  \inf_{\substack{\bv_h \in [\mathbb{P}^2(\mathcal{T}_h)]^2\\ \|\bv_h \|_{H^1(\bbP)} = 1}} 
          2\int_\bbP \sum_{i,j=1}^2[\epsilon(\bv_h)]_{ij} [ \epsilon(\bv_h) ]_{ij}\,d\bx,\\
  \Cconth &\coloneqq \sup_{\substack{\bu_h, \bv_h, \bw_h\in [\mathbb{P}^2(\mathcal{T}_h)]^2 
  \\
  \|\bu_h\|_{H^1(\bbP)} =\|\bv_h\|_{H^1(\bbP)} =\|\bw_h\|_{H^1(\bbP)} = 1}}
  \int_\bbP((\bu_h \cdot\nabla)\bv_h)\cdot\bw_h \, d\bx\; .
\end{align*}

The discretized weak formulation reads, for $\bbf_h\in\mathcal{P}_{N, h}$: 
find $(\bu_h, p_h)\in[\mathbb{P}_{\Gamma}^2(\mathcal{T}_h)]^2\times\mathbb{P}^1(\mathcal{T}_h)$, 
that satisfy for all $(\bv_h, q_h)\in[\mathbb{P}_{\Gamma}^2(\mathcal{T}_h)]^2\times\mathbb{P}^1(\mathcal{T}_h)$:
\begin{equation}
  \label{eq:weakINS}
  \begin{split}
  \nu\int_{\Omega}\nabla \bu_h \cdot \nabla \bv_h d\x - \int_{\Omega} p_h\ \nabla\cdot\bv_h d\x + &\int_{\Omega}q_h\nabla\cdot\bu_h d\x\\ & + \int_{\Omega}\left(\left(\nabla\bu_h\right)\bu_h\right)\cdot\bv_h d\x-\int_{\Omega}\bbf_h\cdot\bv_h d\x = 0,
\end{split}
\end{equation}
where we have introduced the following notation 
\begin{equation*}
  \mathbb{P}_{\Gamma}^2(\mathcal{T}_h) = \left\{ \bv_h \in [\mathbb{P}^2(\mathcal{T}_h)]^2: \bv_h = \bzero\text{ on }\Gamma_D, \quad \bv_h\cdot\mathbf{n} = \bzero\text{ on }\Gamma_G\right\}.
\end{equation*}
\subsection{Reduced basis with Proper Orthogonal Decomposition}
\label{subsec: pod}
We denote by $\mathcal{K}_N$ the set of solutions $(\ub, p)$ of the INS with source term $\bbf\in\mathcal{P}_N$:
\begin{equation*}
(\ub, p)\in\mathcal{K}_N \subset\cK 
:= 
B^2_{\ubeta}(\bbP)^2 \times B^1_{\ubeta}(\bbP) \subset H^1(\bbP)^2 \times L^2(\bbP),
\end{equation*}
and its discrete counterpart for $\bbf_{i, h}\in\mathcal{P}_{N, h}$ with $(\bu_h, p_h)\in\mathcal{K}_{N, h}\subset[\mathbb{P}_{\Gamma}^2(\mathcal{T}_h)]^2\times\mathbb{P}^1(\mathcal{T}_h)$.

From the discrete solutions $(\bu_h, p_h)\in\mathcal{K}_{N, h}$, we compute the $n$-dimensional linear subspaces $\mathbf{V}_{\text{POD},n}\times Q_{\text{POD},n}\subset  \bW\times Q$ through the proper orthogonal decomposition~\cite{Hesthaven2016}: fixed $N>n>0$, with $|\mathcal{K}_{N, h}|=|\mathcal{P}_{N, h}|=N$ the cardinality of the finite discrete parameter space $\mathcal{P}_{N, h}$, we collect column-wise the discrete solutions $\{(\bu^i_h, p^i_h)\}_{i=1}^{N}$ in two matrices:
\begin{equation*}
  M_{\bu_h}=(\bu_h^1,\dots,\bu_h^N)\in\mathbb{R}^{d_u\times N},\qquad M_{p_h}=(p_h^1,\dots,p_h^N)\in\mathbb{R}^{d_p\times N},
\end{equation*}
where $d_u>0$ and $d_p>0$ are the number of degrees of freedom of the discrete velocity and pressure fields. The values of $d_u$ and $d_p$ depend on the mesh and on the choice of FEM space, but not on the choice of parameter $\bbf_{i, h}\in\mathcal{P}_{N, h}$.

The leading $n$ modes of variation ordered column-wise in $U_{u_h}$ and $U_{p_h}$ of the singular value decomposition of the matrices $M_{u_h}$ and $M_{p_h}$, respectively, define the subspaces $\mathbf{V}_{\text{POD},n}\times Q_{\text{POD},n}\subset  \bW\times Q$:
\begin{align*}
  M_{u_h} &\eqsvd U_{u_h} S_{u_h} V_{u_h},\qquad &U_{u_h}\in\mathbb{R}^{d_u\times n},\ S_{u_h}\in\mathbb{R}^{n\times n},\ V_{u_h}\in\mathbb{R}^{n\times d_u},\\
  M_{p_h} &\eqsvd U_{p_h} S_{p_h} V_{p_h},\qquad &U_{p_h}\in\mathbb{R}^{d_p\times n},\ S_{p_h}\in\mathbb{R}^{n\times n},\ V_{p_h}\in\mathbb{R}^{n\times d_p},
\end{align*}
where $S_{u_h}, S_{p_h}$ are the diagonal matrices of the singular values. 

We can define the linear operators $\Pi_{\mathbf{V}_n}:\bW\rightarrow \mathbf{V}_n$ and $\Pi_{Q_n}:Q\rightarrow Q_n$ as the orthogonal projectors onto the $n$-dimensional linear subspaces $\mathbf{V}_n \times Q_n \subset  \bW\times Q$ in $H^1(\Omega)^2$ and $L^2(\Omega)$, respectively. We will employ the notations $\Pi_{\mathbf{V}_{\text{POD},n}}$ and $\Pi_{Q_{\text{POD},n}}$, when $\mathbf{V}_n = \mathbf{V}_{\text{POD},n}$ and $Q_n = Q_{\text{POD},n}$, respectively.

Alternative strategies include weak Greedy procedures that rely on \textit{a posteriori} estimators~\cite{Veroy2005}. Despite several advantages, the main issue with these approaches is the availability of a reliable and efficient error estimator. Indeed, given the presence of nonlinear terms requiring the implementation of an \textit{a posteriori} estimators based on the Brezzi-Rappaz-Raviart theory \cite{BRR}, and the complexity of the parametrization chosen, here we only consider a POD-based reduction. Additionally, the computational cost can be further reduced using hyper-reduction strategies recovering the affine decomposition assumption, and thus the efficiency the whole strategy, e.g.\ via Empirical Interpolation Method (EIM)~\cite{BarraultEmpiricalInterpolationMethod2004,Canuto2009} or its discrete (DEIM)~\cite{ChaturantabutNonlinearModelReduction2010} and generalized (GEIM)~\cite{MadayEtAlGEIMStokes2015} versions.

\subsection{Upper bound on the Kolmogorov n-width}
\label{subsec:upperBoundKnw}
In order to show that the Kolmogorov $n$-width decay is bounded from above 
by the exponential rate derived in Theorem~\ref{thm:dnK}, 
we provide an estimate in terms of the discrete solutions of the incompressible Navier-Stokes equations. 

We want to estimate the Kolmogorov n-width of $\cK_N \subset H^1(\bbP)^2 \times L^2(\bbP)$, defined as:
\begin{align}
  \label{eq:KnWBd_INS}
    d_n(\cK_N, &H^1(\bbP)^2 \times L^2(\bbP))
    =\\
    &\inf_{\mathbf{V}_n \times Q_n \subset  \bW\times Q} 
    \max_{(\bu,p) \in \mathcal{K}_{N}}
    \| \bu - \Pi_{\mathbf{V}_n}\bu \|_{H^1(\bbP)^2} 
    +
    \| p   -  \Pi_{Q_n}p  \|_{L^2(\bbP)},\notag
\end{align}
where we took the maximum, since $\mathcal{K}_N$ has finite cardinality.

\begin{theorem}
  For each $N\in\mathbb{N}$, dimension of the parameter space $\mathcal{P}_N$, there exists an arbitrary small $\epsilon_{\mathcal{K}_{N}}>0$, such that the following upper bound to the Kolmogorov n-width is valid:
  \begin{align*}
    d_n(\cK_N, &H^1(\bbP)^2 \times L^2(\bbP))\leq \\ &\max_{(\mathbf{u}_h,p_h) \in \mathcal{K}_{N, h}}\| \mathbf{u}_h - \Pi_{\mathbf{V}_{\text{POD},n}}\mathbf{u}_h \|_{H^1(\bbP)} 
    +
    \| p_h   -  \Pi_{Q_{\text{POD},n}}p_h  \|_{L^2(\bbP)} + \epsilon_{\mathcal{K}_{N}}.
  \end{align*}
\end{theorem}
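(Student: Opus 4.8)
The plan is to bound the continuous $n$-width directly from its variational definition \eqref{eq:KnWBd_INS} by inserting the concrete POD spaces as a feasible competitor, and then to transfer the continuous best-approximation error onto the computable discrete one, collecting the finite-element consistency error into the remainder $\epsilon_{\mathcal{K}_N}$. Since $\mathcal{P}_N$, and hence the solution sets $\mathcal{K}_N$ and $\mathcal{K}_{N,h}$, are finite of cardinality $N$, the supremum over $\mathcal{K}_N$ reduces to a maximum over the $N$ parameters, which is what renders the splitting elementary once convergence of the truth solver is granted.

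First I would discard the outer infimum in \eqref{eq:KnWBd_INS} by testing with the admissible pair $\mathbf{V}_{\text{POD},n}\times Q_{\text{POD},n}\subset\bW\times Q$, whose dimension is at most $n$ by construction. This gives at once
\[
d_n(\mathcal{K}_N, H^1(\bbP)^2 \times L^2(\bbP)) \leq \max_{(\bu,p)\in\mathcal{K}_N} \|\bu - \Pi_{\mathbf{V}_{\text{POD},n}}\bu\|_{H^1(\bbP)} + \|p - \Pi_{Q_{\text{POD},n}}p\|_{L^2(\bbP)}.
\]
For each parameter $\bbf_i\in\mathcal{P}_N$ I would pair the continuous solution $(\bu^i,p^i)\in\mathcal{K}_N$ with the truth solution $(\bu_h^i,p_h^i)\in\mathcal{K}_{N,h}$ of \eqref{eq:weakINS}. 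Since $\Pi_{\mathbf{V}_{\text{POD},n}}\bu^i$ is the $H^1$-orthogonal best approximation of $\bu^i$ in $\mathbf{V}_{\text{POD},n}$ while $\Pi_{\mathbf{V}_{\text{POD},n}}\bu_h^i$ merely lies in that subspace, the best-approximation property followed by the triangle inequality yields
\[
\|\bu^i - \Pi_{\mathbf{V}_{\text{POD},n}}\bu^i\|_{H^1(\bbP)} \leq \|\bu^i - \Pi_{\mathbf{V}_{\text{POD},n}}\bu_h^i\|_{H^1(\bbP)} \leq \|\bu^i - \bu_h^i\|_{H^1(\bbP)} + \|\bu_h^i - \Pi_{\mathbf{V}_{\text{POD},n}}\bu_h^i\|_{H^1(\bbP)},
\]
and analogously for the pressure with $\Pi_{Q_{\text{POD},n}}$ in $L^2(\bbP)$. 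Adding the two, using $\max_i(a_i+b_i)\leq \max_i a_i + \max_i b_i$ over the finite index set, and setting $\epsilon_{\mathcal{K}_N} := \max_{i\in\{1,\dots,N\}}\big(\|\bu^i - \bu_h^i\|_{H^1(\bbP)} + \|p^i - p_h^i\|_{L^2(\bbP)}\big)$, the right-hand side splits into exactly the claimed discrete POD error over $\mathcal{K}_{N,h}$ plus $\epsilon_{\mathcal{K}_N}$.

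The substantive point, and the main obstacle, is to certify that $\epsilon_{\mathcal{K}_N}$ can be made arbitrarily small; this is not a formal manipulation but the convergence of the truth solver on the $N$ snapshots, and it couples two effects. On one hand the data is itself perturbed, since the discrete Laplace eigenfunctions defining $\mathcal{P}_{N,h}$ in \eqref{eq:fhspace} only approximate the exact eigenfunctions spanning $\mathcal{P}_N$, so one must control $\|\bbf_i - \bbf_{i,h}\|_{\bW^*}$ and invoke Lipschitz dependence of the Navier-Stokes solution map on the data, valid in the uniqueness ball $\bM$ under the small-data hypothesis of Assumption~\ref{ass:SmData} and Theorem~\ref{thm:existence-weak-solution}. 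On the other hand one needs quasi-optimal convergence of the LBB-stable $\mathbb{P}^2$--$\mathbb{P}^1$ Taylor-Hood discretization of the nonlinear problem \eqref{eq:weakINS}, which follows from the Brezzi-Rappaz-Raviart theory \cite{BRR} once the discrete small-data condition keeps $(\bu_h^i,p_h^i)$ inside the discrete uniqueness ball. Combining these, for each fixed $i$ one obtains $\|\bu^i-\bu_h^i\|_{H^1(\bbP)} + \|p^i-p_h^i\|_{L^2(\bbP)}\to 0$ as $h\to 0$; because $N$ is fixed, the maximum over the finite index set converges as well, and the mesh can be refined until $\epsilon_{\mathcal{K}_N}$ drops below any prescribed tolerance, completing the argument.
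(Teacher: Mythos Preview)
Your proof is correct and follows the same strategy as the paper's: test the $n$-width with the POD subspaces as a specific competitor, then split into the discrete POD reconstruction error over $\mathcal{K}_{N,h}$ plus a finite-element consistency remainder $\epsilon_{\mathcal{K}_N}$. The only technical differences are that you invoke the best-approximation property of the orthogonal projector where the paper uses the cruder triangle inequality together with projection boundedness (incurring a factor $1+\|\Pi\|$ in its $\epsilon_{\mathcal{K}_N}$), and you argue more explicitly---via data stability in the uniqueness ball and Brezzi--Rappaz--Raviart convergence of the Taylor--Hood scheme---why the remainder can be driven below any tolerance, which the paper simply postulates.
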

\begin{proof}
Notice that we can estimate the projection errors $\mathbf{u}_h - \Pi_{\mathbf{V}_{\text{POD},n}}$ and $p_h   -  \Pi_{Q_{\text{POD},n}}p_h$ only on discrete solutions $(\bu_h, p_h)\in\mathcal{K}_{N, h}\subset[\mathbb{P}_{\Gamma}^2(\mathcal{T}_h)]^2\times\mathbb{P}^1(\mathcal{T}_h)$ corresponding to the discrete counterpart of the parameter space $\mathcal{P}_{N,h}$ given by:
\begin{equation}
  \begin{split}
  \label{eq:fhspace1}
  \mathcal{P}_{N, h}=\Bigg\{\bbf_{i, h}\in [\mathbb{P}_{\Gamma}^2(\mathcal{T}_h)]^2\ :\ \int_{\Omega}\nabla\bbf_{i, h}\cdot\nabla\mathbf{g}_{h} &= \lambda_{n, h}\int_{\Omega}\bbf_{i, h}\cdot\nabla\mathbf{g}_{h},\\\; &\forall\mathbf{g}_{h}\in [\mathbb{P}_{\Gamma}^2(\mathcal{T}_h)]^2,\ i\in\{1,\dots, N\}\Bigg\}\;,
  \end{split}
\end{equation}
satisfying the additional small data hypothesis,$\ \| \bbf_{i, h}\|_{\bW^*}\leq \nu^2 \Ccoer^2/4\Ccont$.

We can now bound the inner part of the left-hand side in \eqref{eq:KnWBd_INS} as follows:
\begin{align*}
  &\max_{(\bu,p) \in \mathcal{K}_{N}}
    \| \bu - \Pi_{\mathbf{V}_n}\bu \|_{H^1(\bbP)^2} 
    +
    \| p   -  \Pi_{Q_n}p  \|_{L^2(\bbP)}\\
    &\leq \max_{(\bu,p) \in \mathcal{K}_{N}}\| \mathbf{u}_h - \Pi_{\mathbf{V}_n}\mathbf{u} \|_{H^1(\bbP)^2} 
    +
    \| p_h   -  \Pi_{Q_n}p_h  \|_{L^2(\bbP)} + \| \mathbf{u}_h - \ub \|_{H^1(\bbP)^2} 
    \\
    &\qquad\qquad+
    \| p_h   -  p  \|_{L^2(\bbP)}+ \| \Pi_{\mathbf{V}_n}\mathbf{u}_h - \Pi_{\mathbf{V}_n}\mathbf{u} \|_{H^1(\bbP)^2} 
    +
    \| \Pi_{Q_n}p_h   -  \Pi_{Q_n}p  \|_{L^2(\bbP)}\\
    &\leq \max_{(\mathbf{u}_h,p_h) \in \mathcal{K}_{N}}\| \mathbf{u}_h - \Pi_{\mathbf{V}_n}\mathbf{u}_h \|_{H^1(\bbP)^2} 
    +
    \| p_h   -  \Pi_{Q_n}p_h  \|_{L^2(\bbP)}\\
    &\quad+ \max_{(\mathbf{u},p) \in \mathcal{K}_{N, h}}\left(1 + \lVert \Pi_{\mathbf{V}_n}\rVert_{H^1(\bbP)^2}\right)\| \mathbf{u}_h - \ub \|_{H^1(\bbP)^2} 
    + \left(1 + \lVert \Pi_{Q_n}\rVert_{L^2(\bbP)}\right)
    \| p_h   -  p  \|_{L^2(\bbP)}\\
    &\leq\max_{(\mathbf{u}_h,p_h) \in \mathcal{K}_{N, h}}\| \mathbf{u}_h - \Pi_{\mathbf{V}_n}\mathbf{u}_h \|_{H^1(\bbP)^2} 
    +
    \| p_h   -  \Pi_{Q_n}p_h  \|_{L^2(\bbP)} + \epsilon_{\mathcal{K}_N},
\end{align*}
where we have made the assumption that the discrete solutions approximate at an arbitrary small tolerance $\epsilon_{\mathcal{K}_N}>0$ the continuous solutions in $\mathcal{K}_N$:
\begin{equation*}
  \max_{(\mathbf{u},p) \in \mathcal{K}_{N}}\left(1 + \lVert \Pi_{\mathbf{V}_n}\rVert_{H^1(\bbP)^2}\right)\| \mathbf{u}_h - \ub \|_{H^1(\bbP)^2} 
    + \left(1 + \lVert \Pi_{Q_n}\rVert_{L^2(\bbP)}\right)
    \| p_h   -  p  \|_{L^2(\bbP)}\lesssim \epsilon_{\mathcal{K}_N}\;.
\end{equation*}
Restricting ourselves to the linear subspaces $\mathbf{V}_{\text{POD},n}\times Q_{\text{POD},n}\subset  \bW\times Q$ obtained from the proper orthogonal decomposition of the discrete solutions in $\mathcal{K}_{N, h}$, as described in subsection~\ref{subsec: pod}, we find the following upper bound on the Kolmogorov n-width:
\begin{align*}
  d_n(&\cK_N, H^1(\bbP)^2 \times L^2(\bbP))
  =\\
  &=\inf_{\mathbf{V}_n \times Q_n \subset  \bW\times Q} 
  \max_{(\bu,p) \in \mathcal{K}_{N}}
  \| \bu - \Pi_{\mathbf{V}_n}\bu \|_{H^1(\bbP)^2} 
  +
  \| p   -  \Pi_{Q_n}p  \|_{L^2(\bbP)}\\
  &\leq\inf_{\mathbf{V}_n \times Q_n \subset  \bW\times Q} 
  \max_{(\mathbf{u}_h,p_h) \in \mathcal{K}_{N, h}}\| \mathbf{u}_h - \Pi_{\mathbf{V}_n}\mathbf{u}_h \|_{H^1(\bbP)} 
    +
    \| p_h   -  \Pi_{Q_n}p_h  \|_{L^2(\bbP)} + \epsilon_{\mathcal{K}_N}\\
  &\leq
  \max_{(\mathbf{u}_h,p_h) \in \mathcal{K}_{N, h}}\| \mathbf{u}_h - \Pi_{\mathbf{V}_{\text{POD},n}}\mathbf{u}_h \|_{H^1(\bbP)} 
    +
    \| p_h   -  \Pi_{Q_{\text{POD},n}}p_h  \|_{L^2(\bbP)} + \epsilon_{\mathcal{K}_{N}}.
\end{align*}
\end{proof}
Our final purpose is to show experimentally that there exist $b,C > 0$ such that:
\begin{equation}
  \label{eq:ubKnW}
  \max_{(\mathbf{u}_h,p_h) \in \mathcal{K}_{N, h}}\| \mathbf{u}_h - \Pi_{\mathbf{V}_{\text{POD},n}}\mathbf{u}_h \|_{H^1(\bbP)} 
    +
    \| p_h   -  \Pi_{Q_{\text{POD},n}}p_h  \|_{L^2(\bbP)}\leq C\exp( - b n^{1/3}),
\end{equation}
in this way, assuming that $\epsilon_{\mathcal{K}_{N}}\ll C\exp( - b n^{1/3})$, we will show numerically the decay rate proved in Theorem~\ref{thm:dnK}.

\subsection{Numerical framework}
\label{subsec:numframe}

As discussed in Section \ref{subsec:discreteINSFEM}, we solve for the INS exploiting the LBB-stable Lagrangian finite element spaces $\mathbb{P}^2-\mathbb{P}^{1}$, on two different domains: one for the backward-facing step problem, 
and one for the flow past a cube in a channel problem~\cite{Couplet2005}, as depicted in Figure \ref{fig:domains}, left and right respectively. The first geometry models a sudden-expansion channel, 
while the second one is characterized by a cube (in 2D a square) centered in $(0.2, 0.2)$ with side length $0.05$. 

Two unstructured meshes have been exploited to discretize the geometries, 
consisting of $3295$ nodes and $6320$ cells for the backward facing step, 
and $1568$ nodes and $2928$ triangular cells for the flow past a cube.

\begin{figure}[!hb]
  \centering
  \begin{tikzpicture}[scale=1.4]
    \draw[fill=blue!50!green] (0,0.4) -- (0.8,0.4) -- (0.8,0) -- (4,0) -- (4,1) -- (0,1) -- cycle;
    \node[below] at (0,0.4) {\scriptsize{(0, 2)}};
    \node[above] at (0,1) {\scriptsize{(0, 5)}};
    \node[above] at (4,1) {\scriptsize{(22, 5)}};
    \node[below] at (4,0) {\scriptsize{(22, 0)}};
    \node[below] at (0.8,0) {\scriptsize{(4, 0)}};
    \node[below] at (2,0) {\scriptsize{$\Gamma_\text{w}$}};
    \node[above] at (2,1) {\scriptsize{$\Gamma_\text{w}$}};
    \node[left,white] at (1.2,0.2) {\scriptsize{$\Gamma_\text{v}$}};
    \node[left] at (0,0.7) {\scriptsize{$\Gamma_\text{in}$}};
    \node[right] at (4.,0.5) {\scriptsize{$\Gamma_\text{out}$}};
    \node[right,white] at (0.3,0.55) {\scriptsize{$\Gamma_\text{h}$}};
    \node[white] at (2,0.5) {\Large{$\Omega$}};
    \end{tikzpicture}
    \begin{tikzpicture}[scale=2.95]
      \draw[fill=blue!50!blue] (0,0) -- (2.2,0) -- (2.2,0.4) -- (0,0.4) -- cycle;
      \filldraw[color=black, fill=white](0.15,0.15) rectangle (0.25,0.25);
      \node[below] at (0,0) {\scriptsize{(0, 0)}};
      \node[above] at (0,0.4) {\scriptsize{(0, 0.4)}};
      \node[above] at (2.2,0.4) {\scriptsize{(2.2, 0.4)}};
      \node[below] at (2.2,0) {\scriptsize{(2.2, 0)}};
      \node[left] at (0,0.2) {\scriptsize{$\Gamma_\text{in}$}};
      \node[right] at (2.2,0.2) {\scriptsize{$\Gamma_\text{out}$}};
      \node[above] at (1.1,0.4) {\scriptsize{$\Gamma_\text{w}$}};
      \node[below] at (1.1,0) {\scriptsize{$\Gamma_\text{w}$}};
      \node[white] at (1.1,0.2) {\large{$\Omega$}};
      \node[white,below] at (0.35,0.28) {\scriptsize{$\Gamma_\text{c}$}};
      \end{tikzpicture}
  \caption{Geometrical set-up of the INS problems with Dirichlet homogeneous boundary conditions: backward facing step (left) and flow past a cube (right).}
  \label{fig:domains}
  \end{figure}
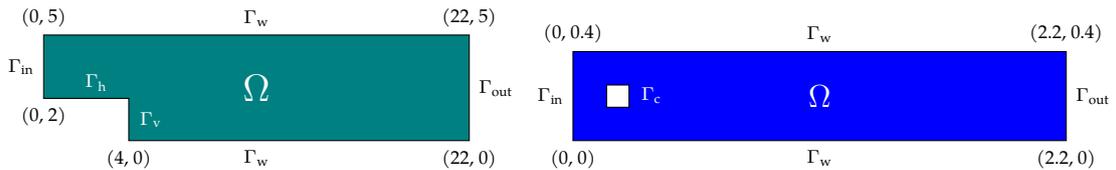

In both cases, 
we employ the weak saddle-point formulation described in subsection~\ref{subsec:discreteINSFEM} 
and homogeneous Dirichlet boundary conditions for the velocity space.
The parameter is the source term that belongs to the parameter space $\mathcal{P}_{N, h}$. 
We will consider $N=|\mathcal{K}_{N, h}|=|\mathcal{P}_{N, h}|$ and $\nu=1$.

The approximate coercivity and continuity constants are 
$\Ccoerh=66$, $\Cconth=526633$ for the flow past a cube test case 
and 
$\Ccoerh=0.448$, $\Cconth=15475$ for the backward step test case. 
The source terms $\mathbf{f}_{i, h}\in\mathcal{P}_{N, h}$ are rescaled to satisfy the small data assumption, i.e.\ such that:
\begin{equation}
  \lVert \mathbf{f}_{i, h}\rVert_{\bW^*_h }=\nu^2 \frac{\Ccoerh^2}{4\Cconth}, \qquad\forall\mathbf{f}_{i, h}\in\mathcal{P}_{N, h}.
\end{equation}

To estimate the KnW of the solution set, we evaluate the upper bound derived from the previous subsection, 
Equation~\eqref{eq:ubKnW}, for different number of POD basis $n\leq N$ computed from the solutions $\{\mathbf{u}_h(\mathbf{f}_{i, h}), p_h(\mathbf{f}_{i, h})\}_{n=1}^{N}$  of the INS equations~\eqref{eq:INS} with source terms in $\mathbf{f}_{i,h}\in\mathcal{K}_{N, h},\ \forall i\in\{1, \dots, N\}$:
\begin{align}
  \max_{(\mathbf{u}_h,p_h) \in \mathcal{K}_{N, h}}\| \mathbf{u}_h &- \Pi_{\mathbf{V}_{\text{POD},n}}\mathbf{u}_h \|_{H^1(\bbP)} 
    +
    \| p_h   -  \Pi_{Q_{\text{POD},n}}p_h  \|_{L^2(\bbP)}\leq \epsilon_u+\epsilon_p\notag\\
    &\epsilon_u = \max_{\mathbf{u}_h \in \pi_1(\mathcal{K}_{N, h})}\| \mathbf{u}_h - \Pi_{\mathbf{V}_{\text{POD},n}}\mathbf{u}_h \|_{H^1(\bbP)}\label{eq:epsilonu},\\
    &\epsilon_p = \max_{p_h \in \pi_2(\mathcal{K}_{N, h})}\| p_h   -  \Pi_{Q_{\text{POD},n}}p_h  \|_{L^2(\bbP)}.\label{eq:epsilonp}
\end{align}

Since the exponential bound of Theorem~\ref{thm:dnK} is valid for all boundary conditions in Equation~\ref{eq:NS}, we present now several numerical results corresponding to theoretically admissible boundary conditions for the two benchmarks.

\subsubsection{Dirichlet homogeneous boundary conditions}
\label{subsec:dir}
In this section, we consider the homogeneous conditions on the Dirichlet boundary $\Gamma_D$ defined as the whole boundary of the two domains $\Omega$, that is for the backward facing step problem one has $\Gamma_D$ = $\Gamma_\text{in} \cup \Gamma_\text{w} \cup \Gamma_\text{out} \cup \Gamma_\text{v} \cup \Gamma_\text{h}$, while for the flow past a cube geometry it holds $\Gamma_D = \Gamma_\text{in} \cup \Gamma_\text{w} \cup \Gamma_\text{out} \cup \Gamma_\text{c}$.
The velocity and pressure fields along with their corresponding source term 
$\mathbf{f}_{i, h}\in\mathcal{P}_{N, h}$ for $i=2$ are shown in Figure~\ref{fig:vpbackcube1} 
for the flow past a cube and backward step, top and bottom respectively.

\begin{figure}[tph!]
  \centering
  \includegraphics[width=0.49\textwidth, trim={0 40 0 105}, clip]{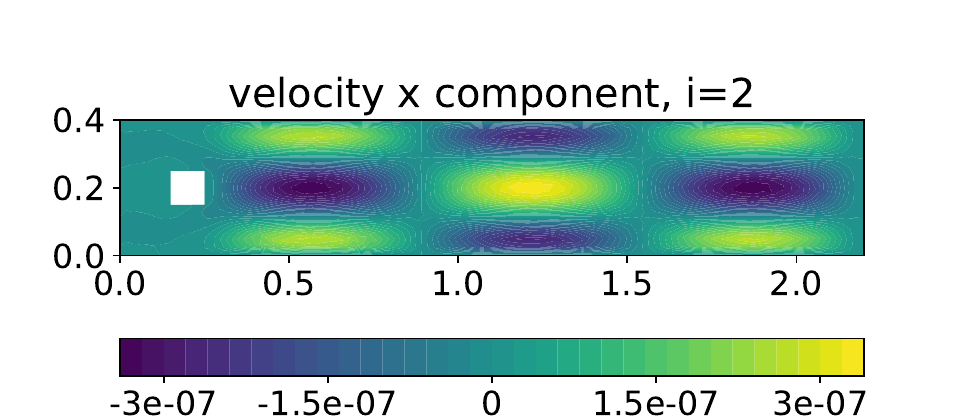}
  \includegraphics[width=0.49\textwidth, trim={0 40 0 105}, clip]{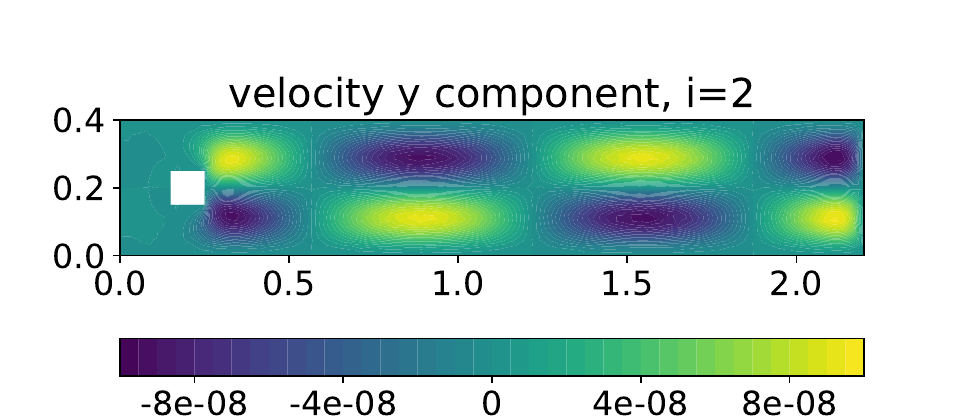}
  \includegraphics[width=0.49\textwidth, trim={0 40 0 105}, clip]{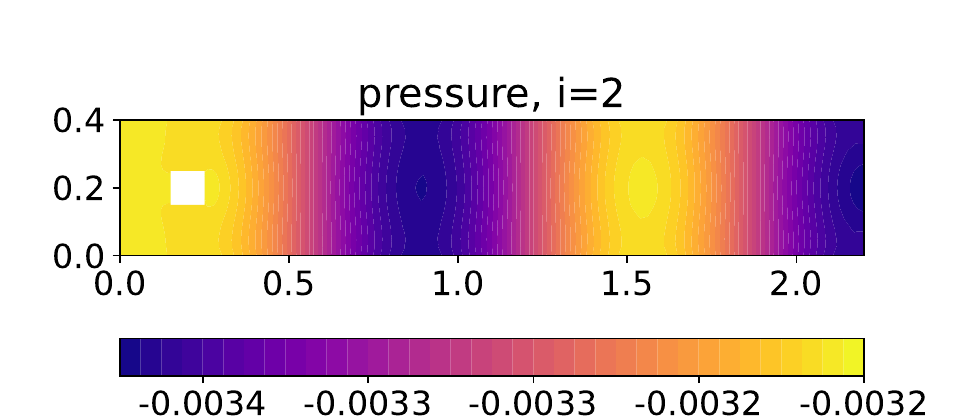}
  \includegraphics[width=0.49\textwidth, trim={0 40 0 105}, clip]{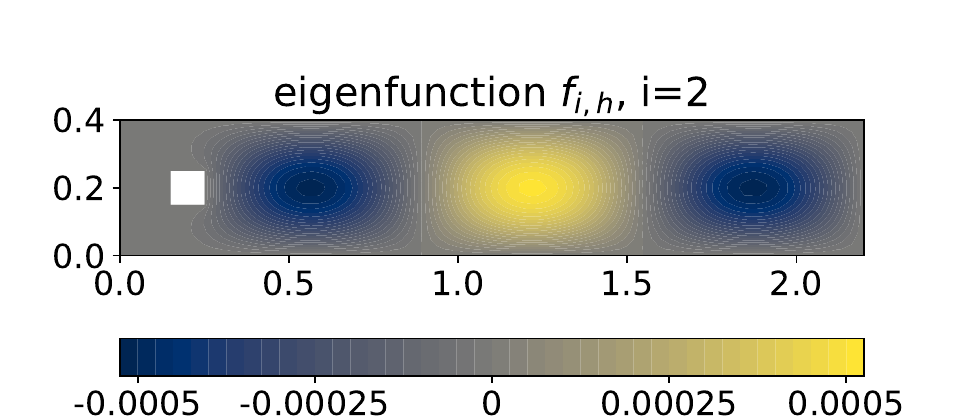}\\[0.5cm]
  \includegraphics[width=0.49\textwidth, trim={0 40 0 105}, clip]{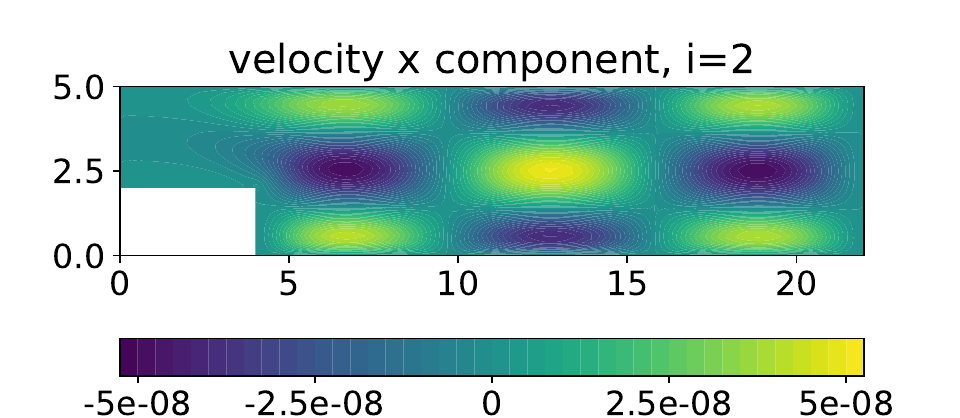}
  \includegraphics[width=0.49\textwidth, trim={0 40 0 105}, clip]{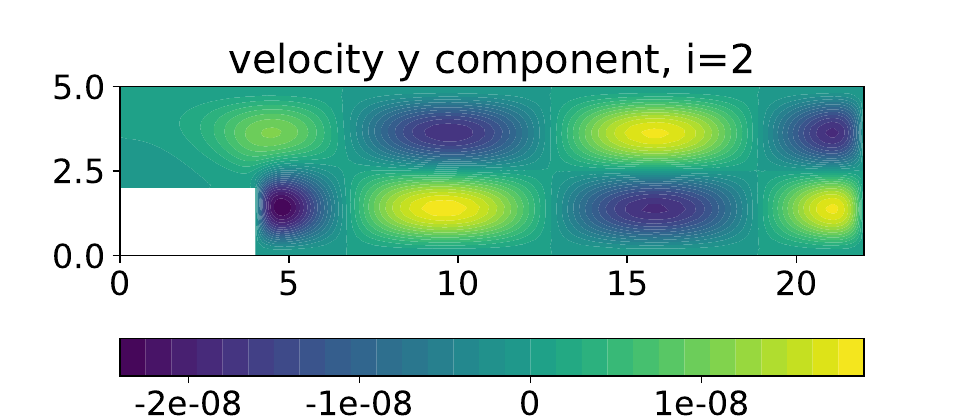}
  \includegraphics[width=0.49\textwidth, trim={0 40 0 105}, clip]{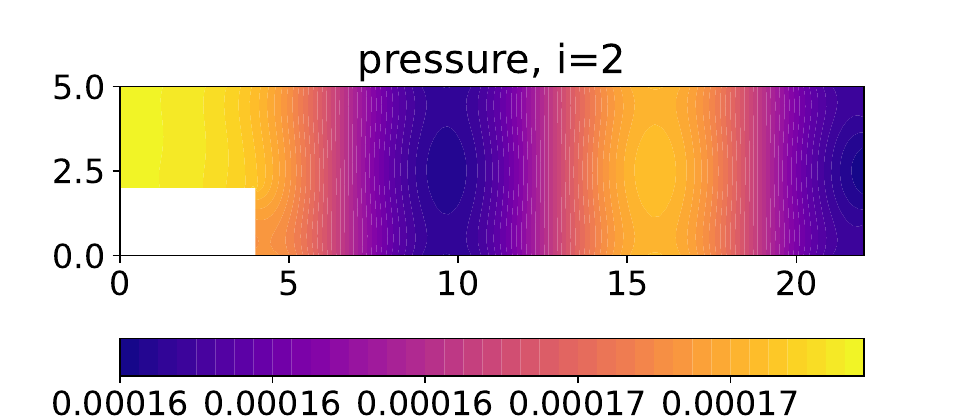}
  \includegraphics[width=0.49\textwidth, trim={0 40 0 105}, clip]{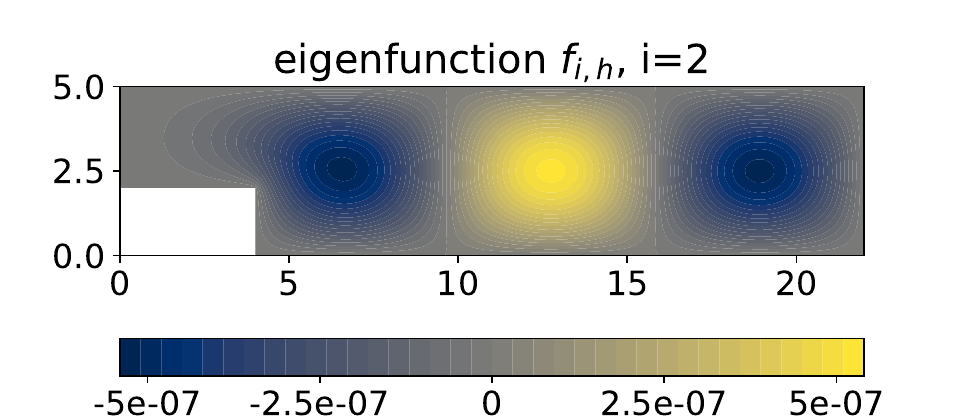}
  \caption{Snapshots of velocity and pressure fields along with corresponding eigenfunction $\mathbf{f}_{i,h}$ 
           of the Dirichlet-Laplace operator, for the INS with homogeneous Dirichlet boundary conditions: flow past a cube (top) and backward facing step (bottom).
}
  \label{fig:vpbackcube1}
\end{figure}

The results of the upper bounds $\epsilon_u$ and $\epsilon_p$ of the Kolmogorov $n$-width for homogeneous Dirichlet boundary conditions are shown in Figure~\ref{fig:dirichlet}. We investigate the behaviour of the decay while varying the number $n$ of POD modes, 
for different amount of sampled solutions $N\in\{250, 500, 750\}$, $n\leq N$. 
KnWs are numerically estimated with respect to a numerical, high-fidelity reference solution computed on a fixed mesh discretization. 
For both benchmarks, velocity and pressure fields exhibit the predicted exponential decay rate $\exp(- b n^{1/3}-c)$, until the asymptotic super-exponential behaviour is reached (due to the standard self-convergence of the POD).

\begin{figure}[tph!]
  \centering
  \includegraphics[width=1\textwidth]{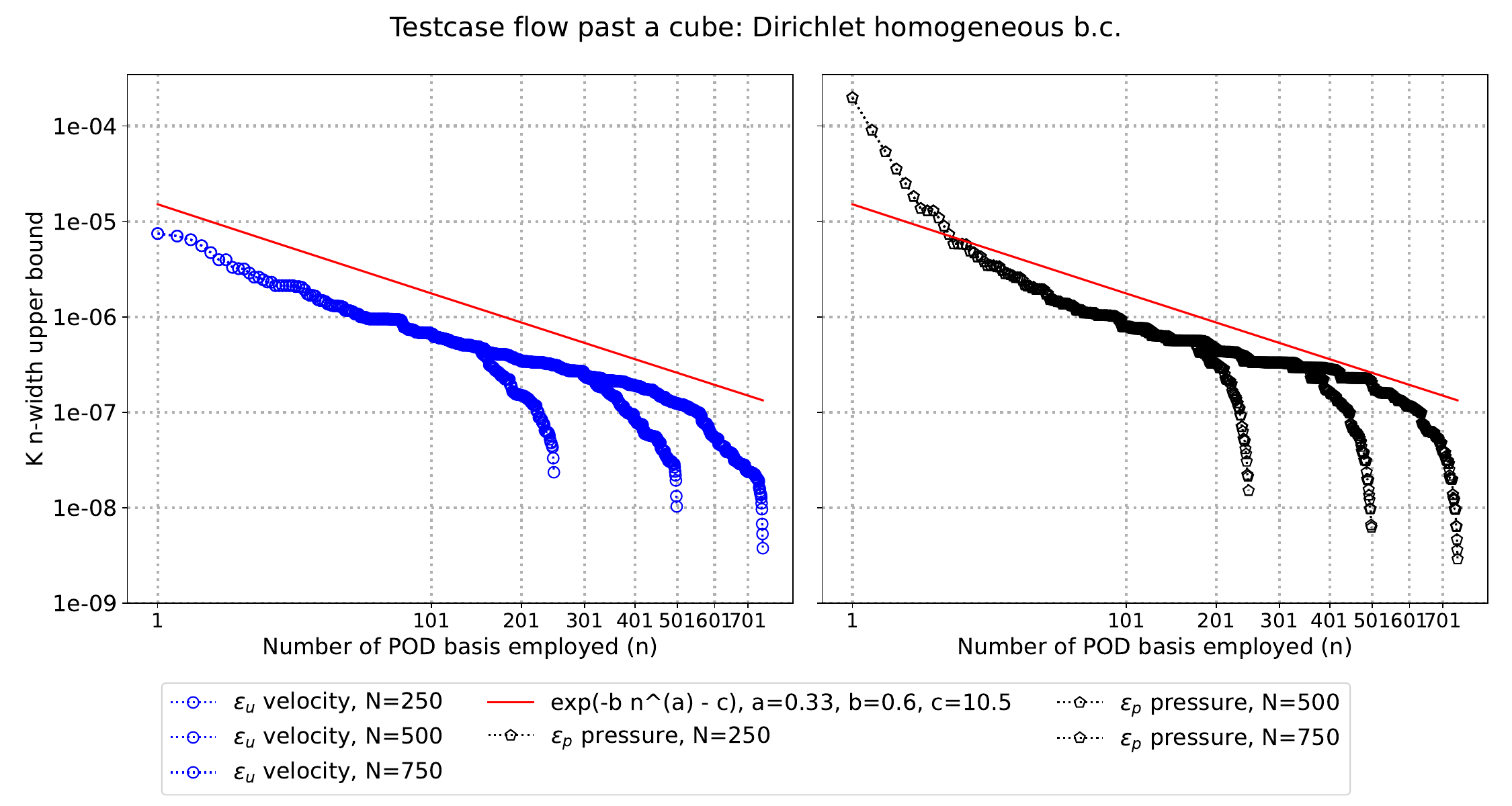}
  \includegraphics[width=1\textwidth]{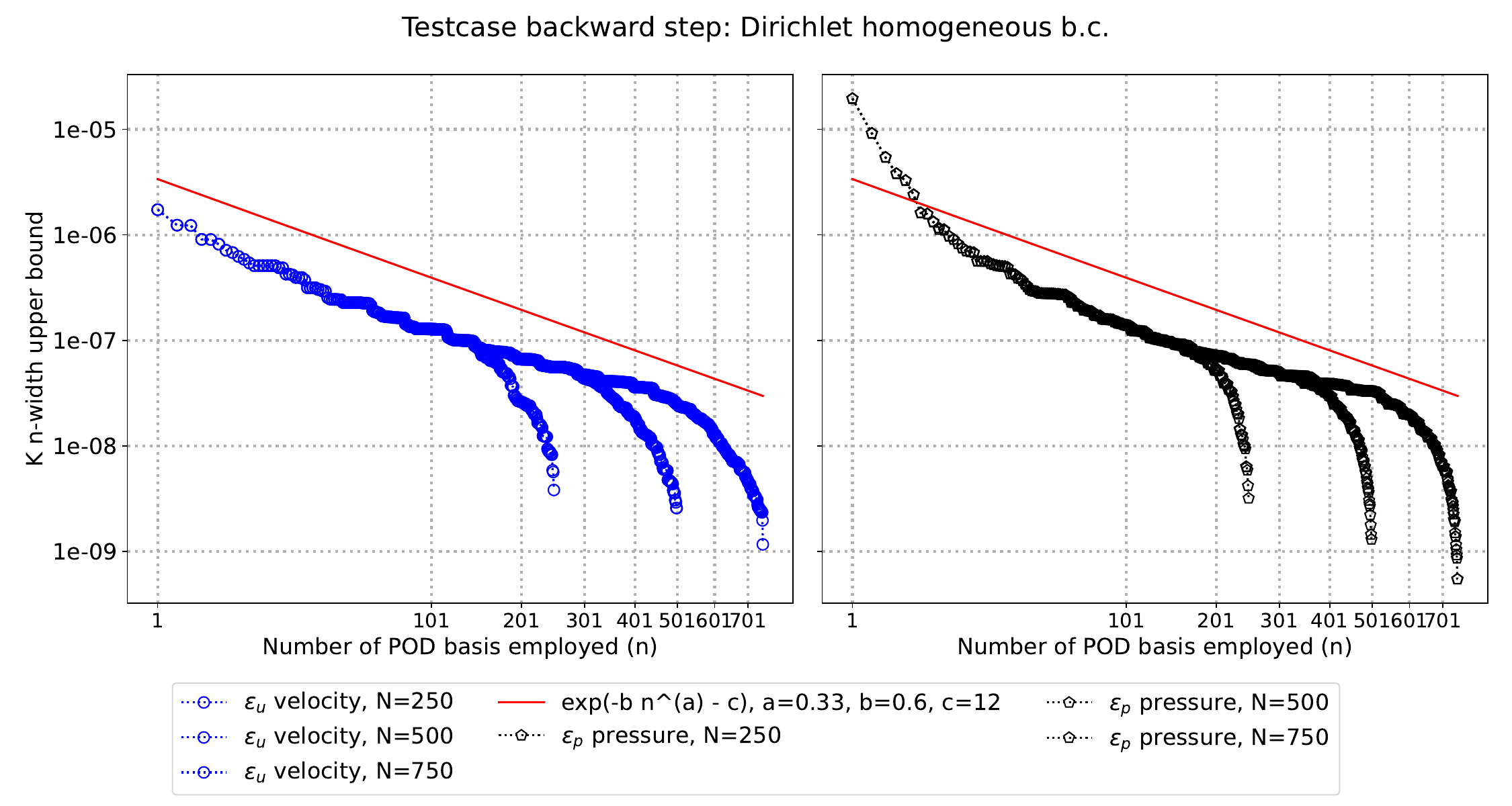}
  \caption{Numerically estimated upper bounds $\epsilon_u$ and $\epsilon_p$ of the Kolmogorov $n$-width for homogeneous Dirichlet boundary conditions varying the amount of snapshots $N$: flow past a cube (top) and backward facing step (bottom). The asymptotic super-exponential behaviour is due to self-convergence to the high-fidelity numerical reference solution.}
  \label{fig:dirichlet}
\end{figure}

To discuss the influence of the kinematic viscosity $\nu$ and the discretization of the high-fidelity snapshots on the decay of the KnW, we depict the evolution of the upper bounds $\epsilon_u$ and $\epsilon_p$ in Figure~\ref{fig:nudirichlet} and Figure~\ref{fig:meshdirichlet}, respectively. For both investigations, we vary the number of POD modes $n\leq N$, and obtain consistent results up to the self-convergence phenomenon. 
In Figure \ref{fig:nudirichlet}, we consider different values of the kinematic viscosity $\nu\in\{0.1, 0.5, 1\}$, and the same decay rate has been observed, consistently to the theoretical results of Theorem~\ref{thm:dnK}.

\begin{figure}[tph!]
  \centering
  \includegraphics[width=1\textwidth]{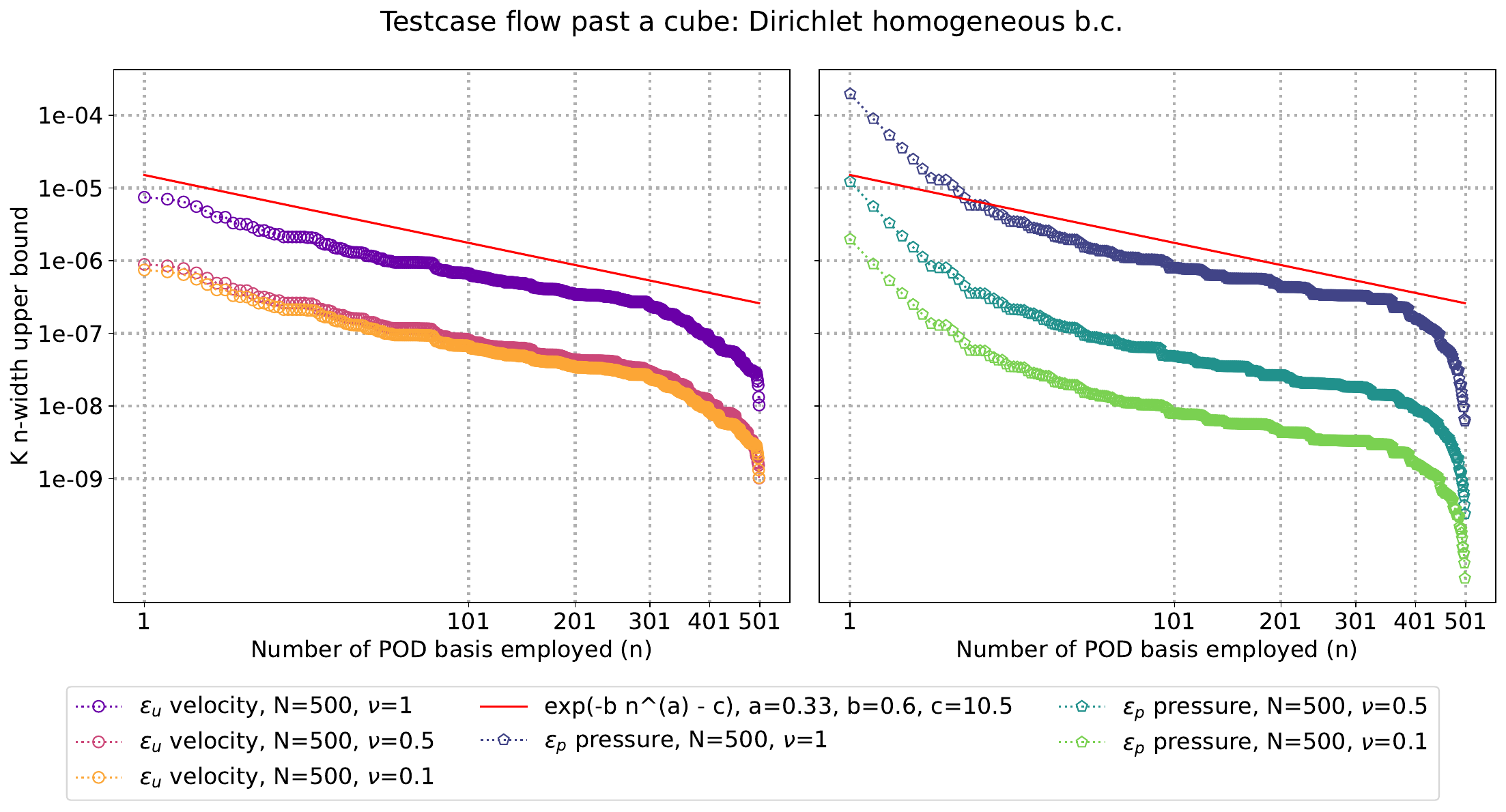}
  \includegraphics[width=1\textwidth]{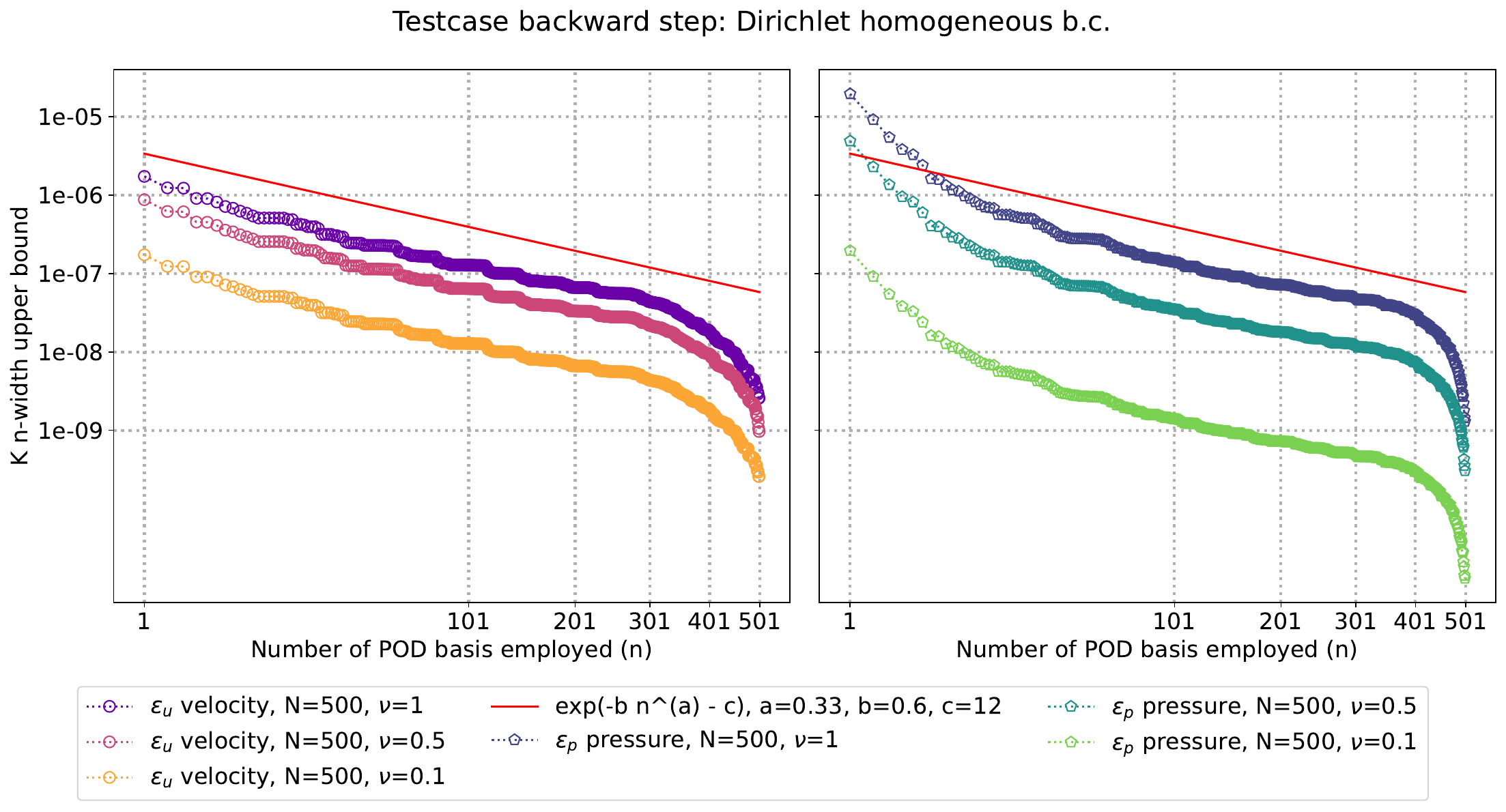}
  \caption{Numerically estimated upper bounds $\epsilon_u$ and $\epsilon_p$ of the Kolmogorov $n$-width for homogeneous Dirichlet boundary conditions varying the viscosity $\nu\in\{0.1, 0.5, 1\}$: flow past a cube (top) and backward facing step (bottom). The asymptotic super-exponential behaviour is due to self-convergence to the high-fidelity numerical reference solution.
}
  \label{fig:nudirichlet}
\end{figure}

Similarly, in Figure~\ref{fig:meshdirichlet}, we show the decay of the KnW for different mesh resolutions, a fine and a coarse discretizations, while keeping fixed the number of parameters $N=500$ (top) and $N=750$ (bottom). Also in this context, once the discretization is fine enough to resolve all the fluid scales, the decay of the KnW is shown to be not affected by the mesh resolution. 

\begin{figure}[tph!]
  \centering
  \includegraphics[width=1\textwidth]{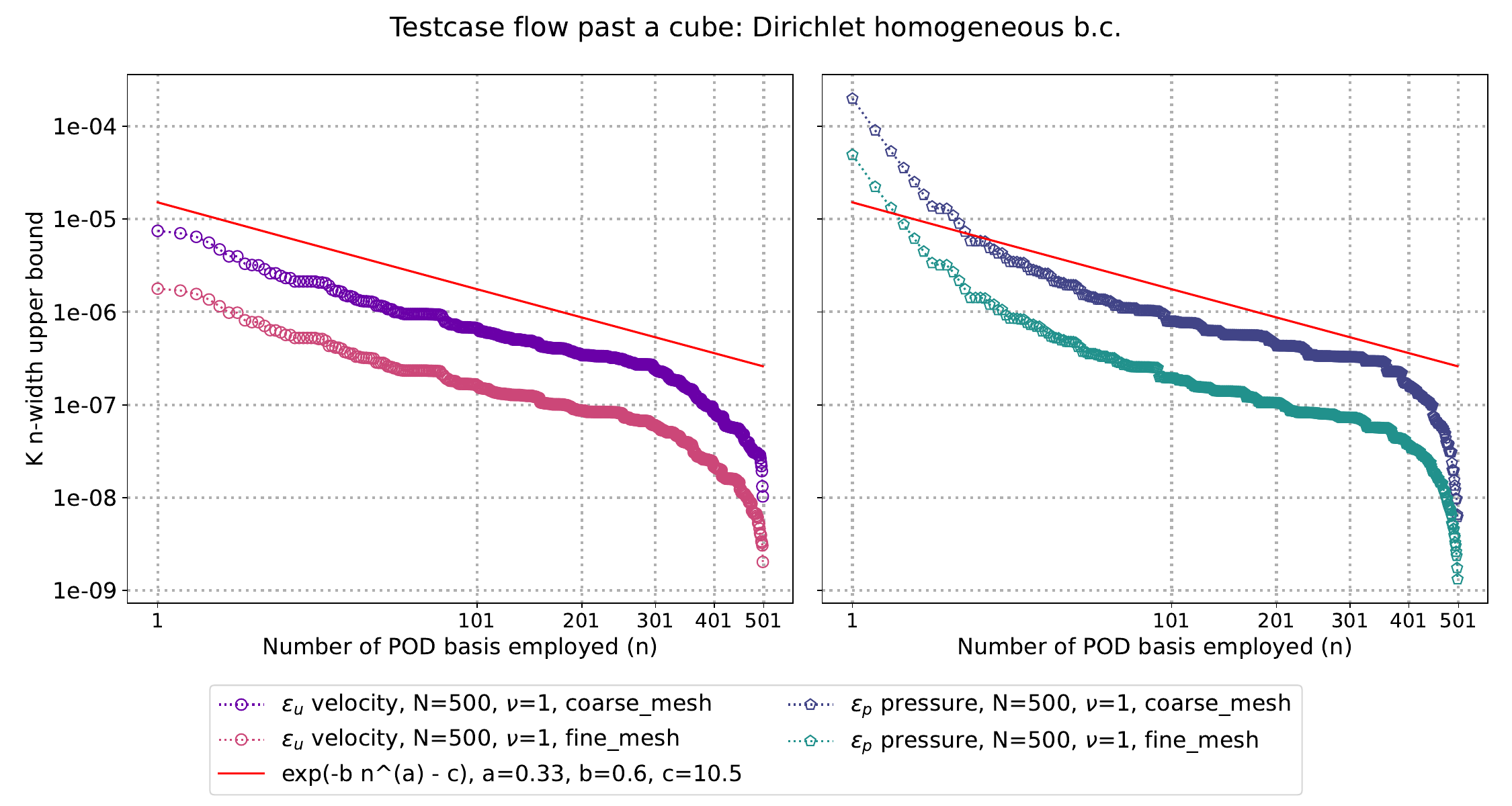}
  \includegraphics[width=1\textwidth]{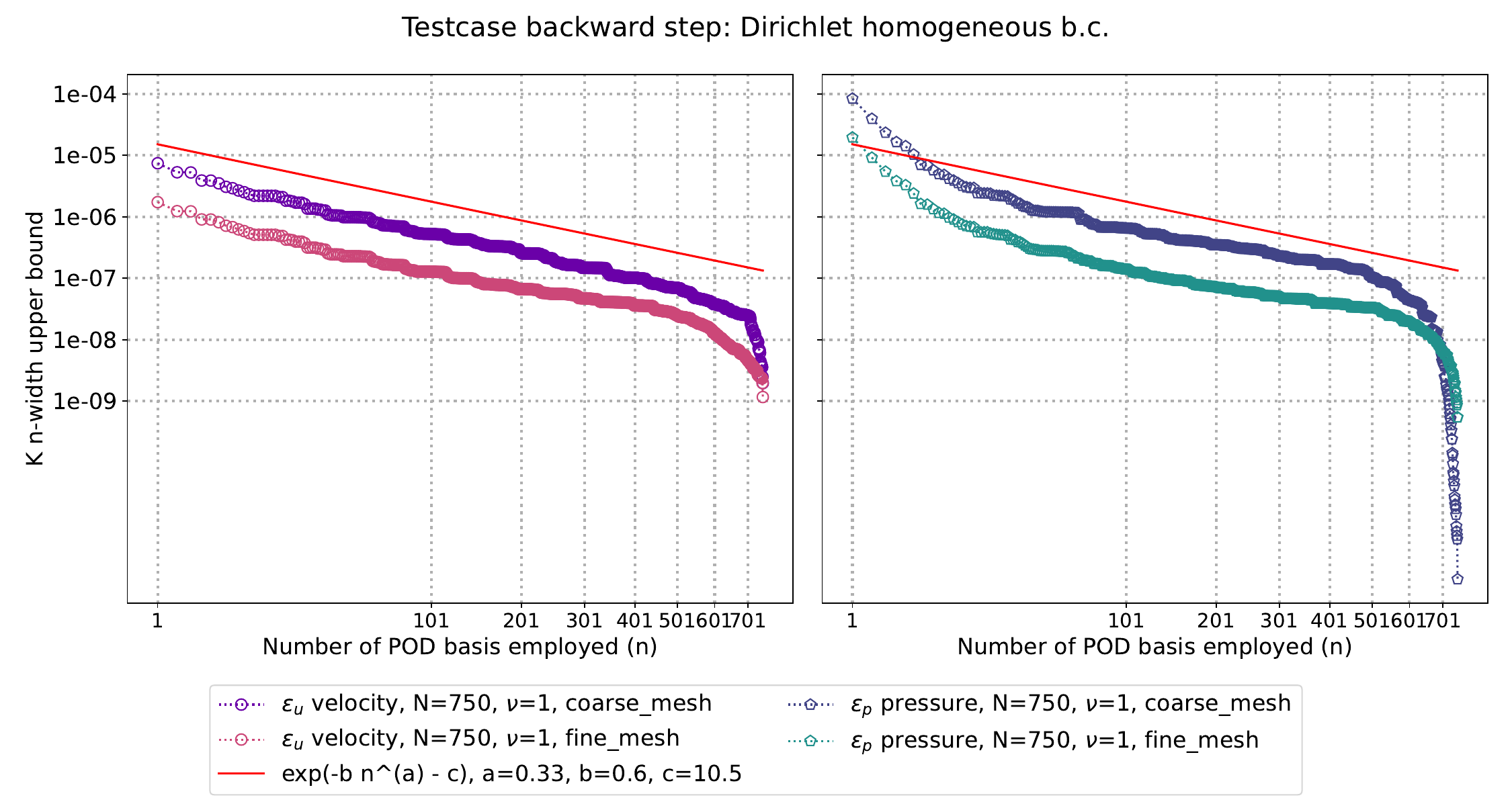}
  \caption{Numerically estimated upper bounds $\epsilon_u$ and $\epsilon_p$ of the Kolmogorov $n$-width for homogeneous Dirichlet boundary conditions varying mesh resolutions: flow past a cube (top) and backward facing step (bottom). The asymptotic super-exponential behaviour is due to self-convergence to the high-fidelity numerical reference solution.
}
  \label{fig:meshdirichlet}
\end{figure}

\subsubsection{Mixed Dirichlet-Neumann homogeneous boundary conditions}
\label{subsec:mixed}

In this section, we report the decay of the upper bounds on the velocity and pressure reconstruction errors, $\epsilon_u$  and $\epsilon_p$, respectively in Equations~\eqref{eq:epsilonu} and  \eqref{eq:epsilonp}, for homogeneous Dirichlet-Neumann boundary conditions.
In particular, following the notation of Figure ~\ref{fig:domains}, we define the Dirichlet boundary for the two benchmarks, respectively as $\Gamma_D = \Gamma_\text{w} \cup \Gamma_\text{v} \cup \Gamma_\text{h}$ and $\Gamma_D = \Gamma_\text{w} \cup \Gamma_\text{c}$, and the Neumann boundary as $\Gamma_N = \Gamma_\text{in} \cup \Gamma_\text{out}$ for both geometries.
The velocity and pressure fields along with their corresponding source term 
$\mathbf{f}_{i, h}\in\mathcal{P}_{N, h}$ for $i=2$ are shown in Figure~\ref{fig:vpbackcube2} 
for the flow past a cube and backward step, top and bottom respectively.

\begin{figure}[tph!]
  \centering
  \includegraphics[width=0.49\textwidth, trim={0 40 0 105}, clip]{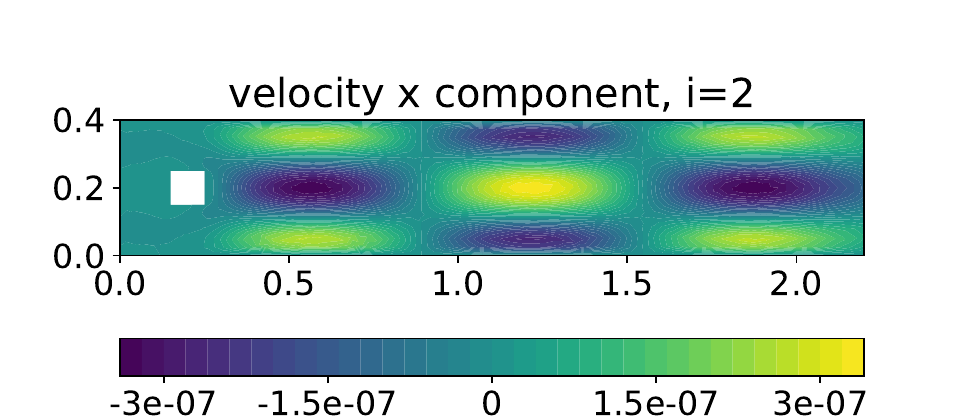}
  \includegraphics[width=0.49\textwidth, trim={0 40 0 105}, clip]{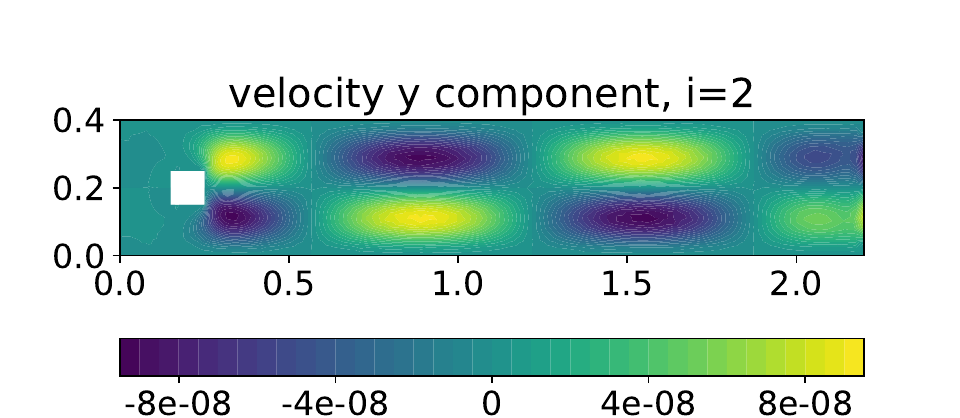}
  \includegraphics[width=0.49\textwidth, trim={0 40 0 105}, clip]{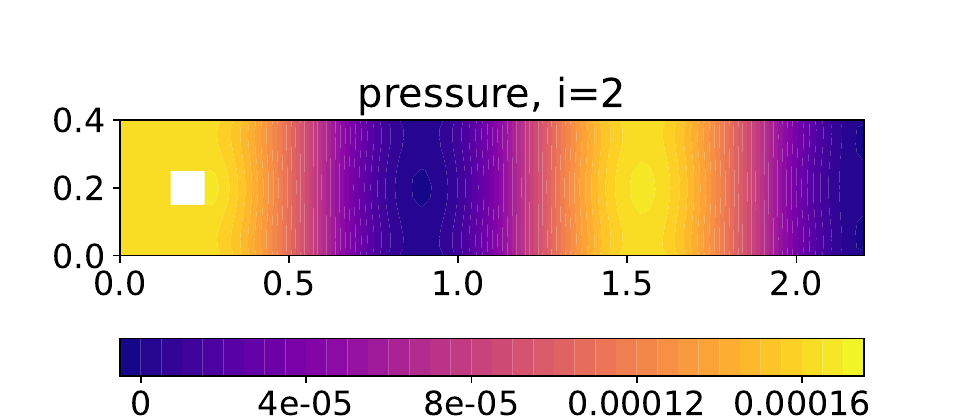}
  \includegraphics[width=0.49\textwidth, trim={0 40 0 105}, clip]{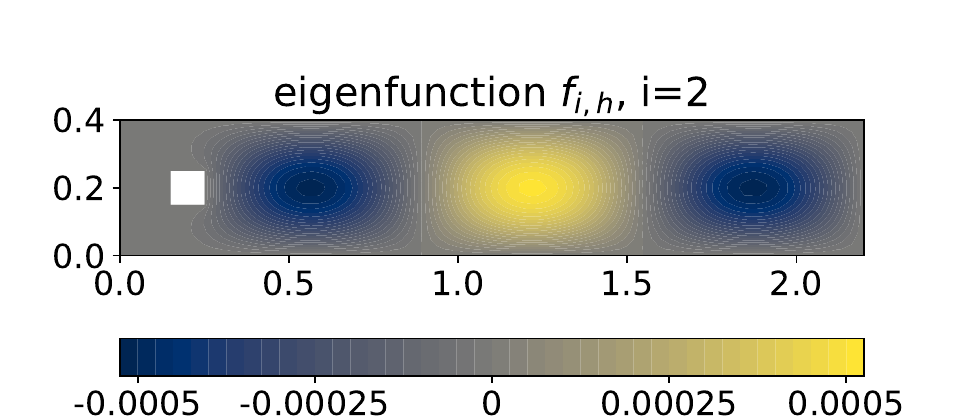}\\[0.5cm]
  \includegraphics[width=0.49\textwidth, trim={0 40 0 105}, clip]{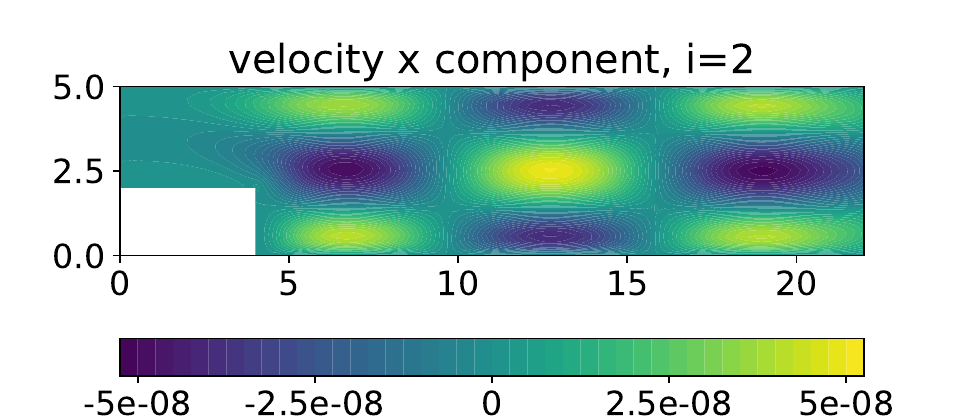}
  \includegraphics[width=0.49\textwidth, trim={0 40 0 105}, clip]{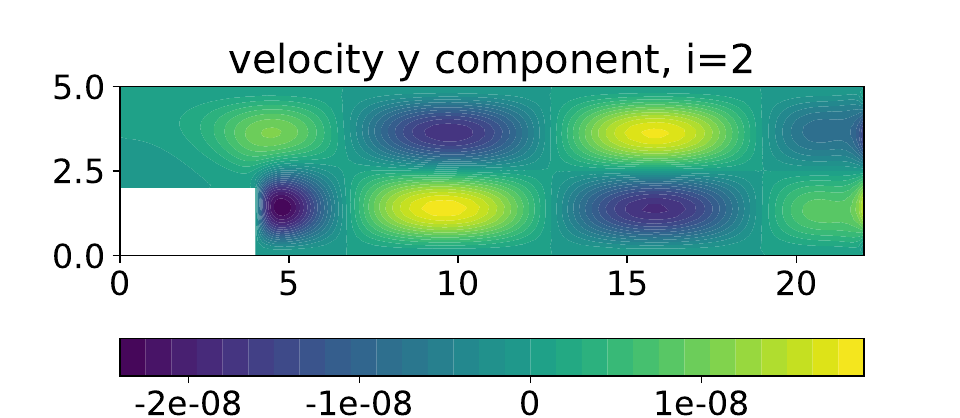}
  \includegraphics[width=0.49\textwidth, trim={0 40 0 105}, clip]{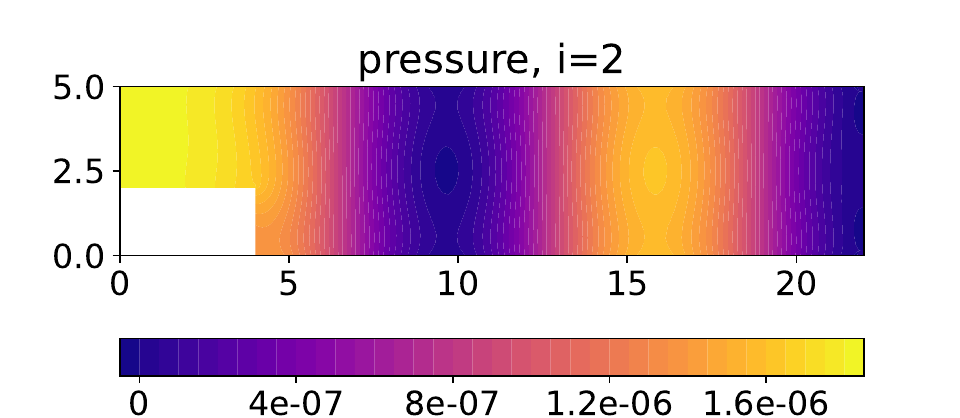}
  \includegraphics[width=0.49\textwidth, trim={0 40 0 105}, clip]{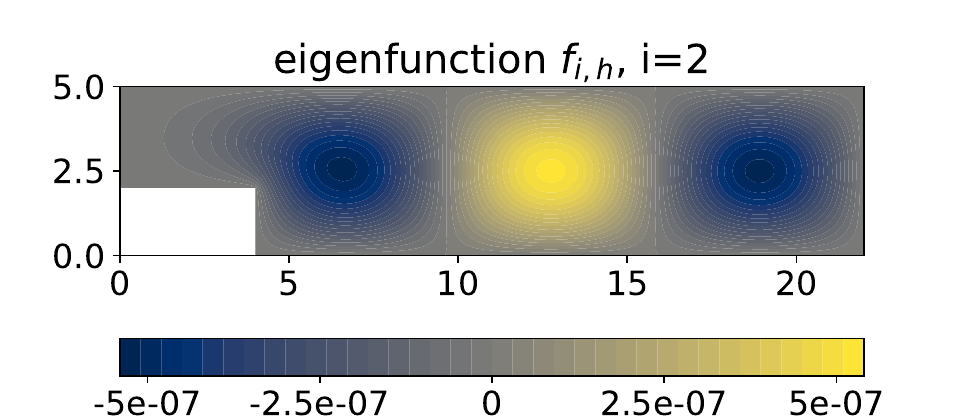}
  \caption{Snapshots of velocity and pressure fields along with corresponding eigenfunction $\mathbf{f}_{i,h}$ of the Dirichlet-Laplace operator, for the INS with mixed homogeneous Dirichlet-Neumann boundary conditions: flow past a cube (top) and backward facing step (bottom).}
  \label{fig:vpbackcube2}
\end{figure}

The results of the upper bounds $\epsilon_u$ and $\epsilon_p$ of the Kolmogorov $n$-width for homogeneous Dirichlet-Neumann boundary conditions are reported in Figure~\ref{fig:mixed}. We plot the behaviour of the upper bounds $\epsilon_u$ and $\epsilon_p$ while varying the amount of parameters $N\in\{250, 500, 750\}$. Also here,  KnWs are numerically estimated with respect to a numerical, high-fidelity reference solution and the self-convergence behaviour is observed. 

\begin{figure}[tph!]
  \centering
  \includegraphics[width=1\textwidth]{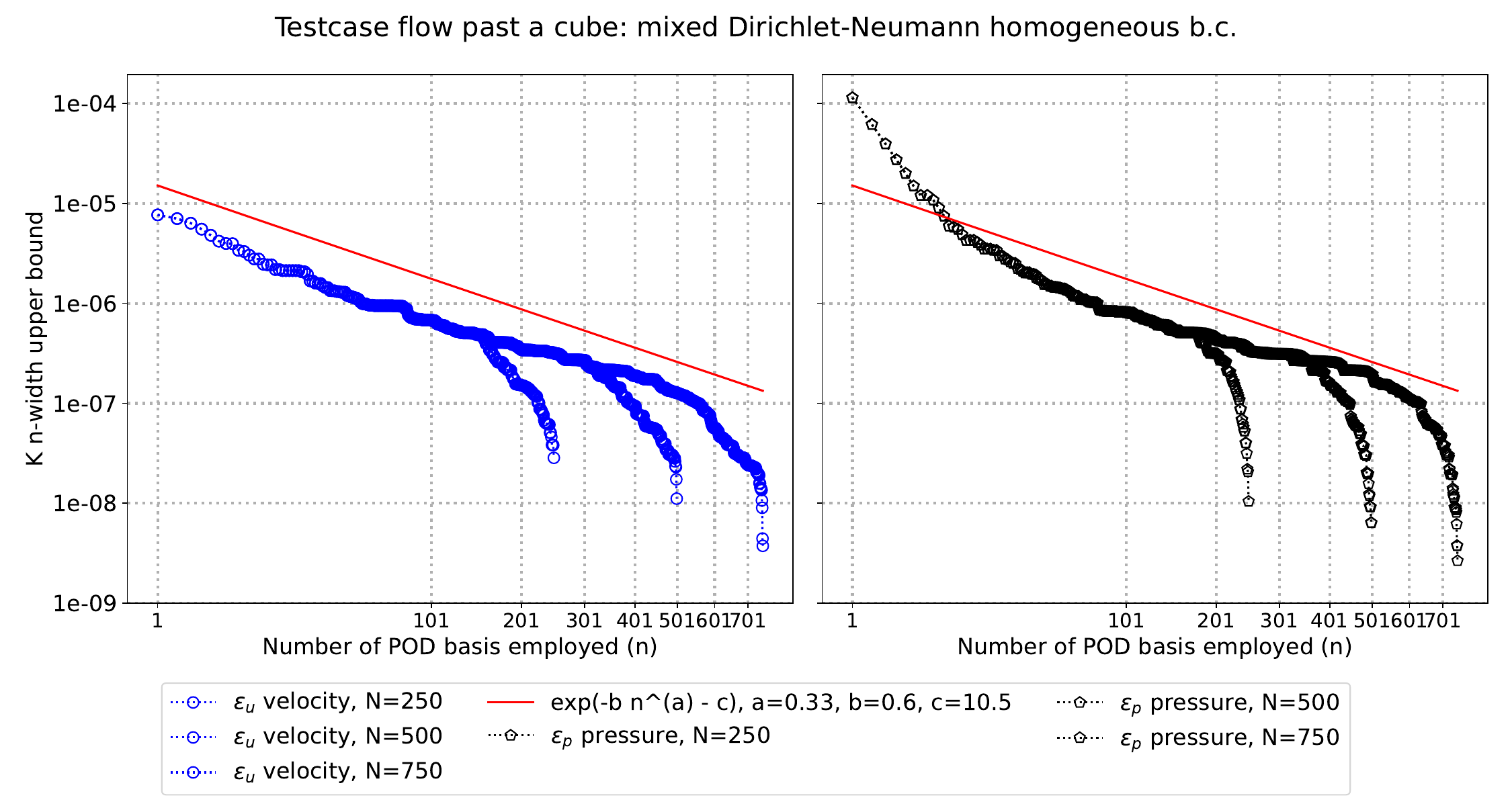}
  \includegraphics[width=1\textwidth]{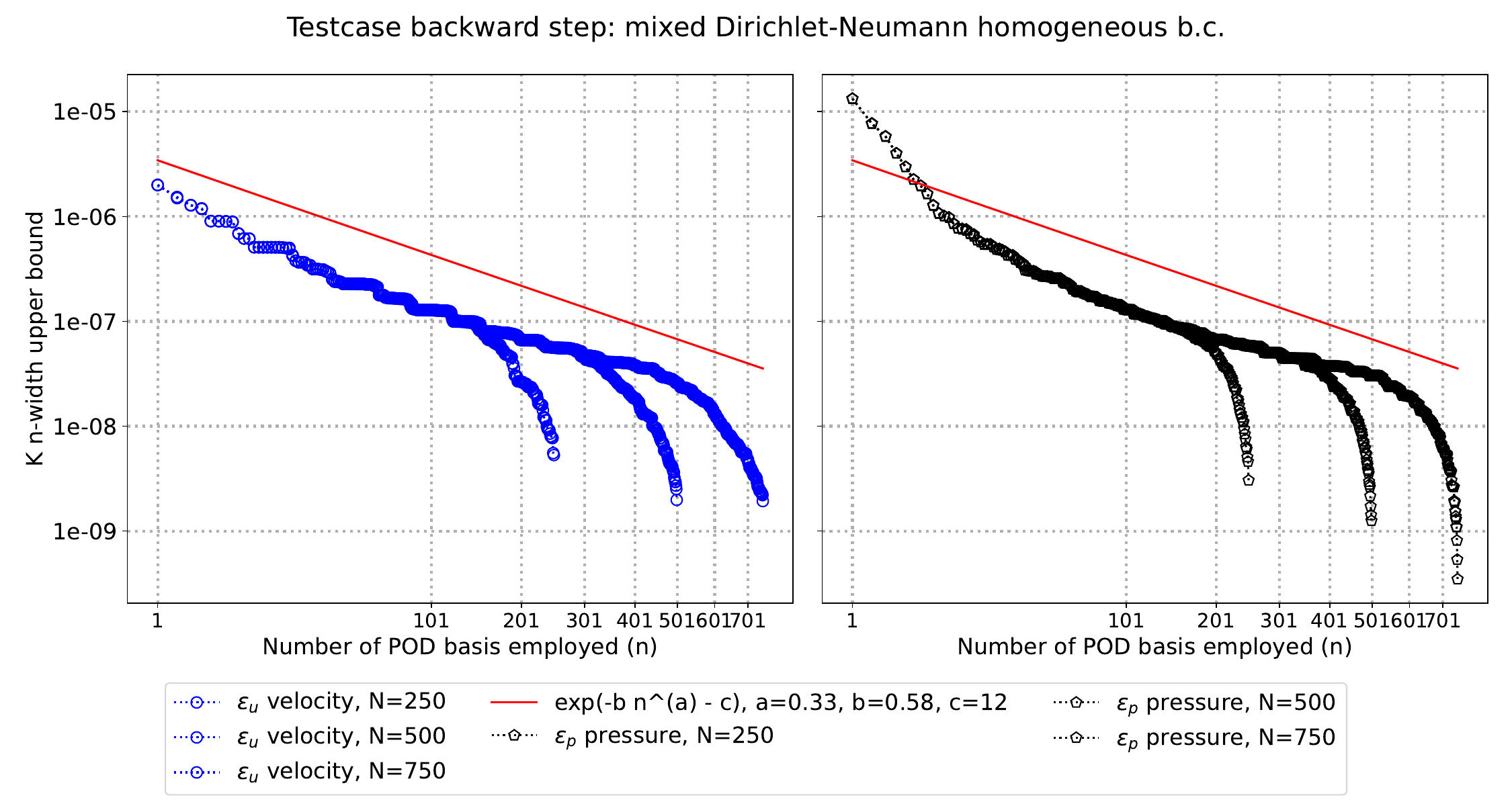}
\caption{Numerically estimated upper bounds $\epsilon_u$ and $\epsilon_p$ of the Kolmogorov $n$-width for mixed Dirichlet-Neumann homogeneous boundary conditions varying the amount of snapshots $N$: flow past a cube (top) and backward facing step (bottom). The asymptotic super-exponential behaviour is due to self-convergence to the high-fidelity numerical reference solution.}
  \label{fig:mixed}
\end{figure}

\subsubsection{Mixed slip and Dirichlet-Neumann homogeneous boundary conditions}
\label{subsec:mixed_slip}

Finally, we consider the more general boundary condition problem, involving both homogeneous Dirichlet-Neumann and slip boundary conditions.
We remark that this setting is still compatible with the assumptions of Theorem~\ref{thm:dnK} and the results of the previous sections.
The discretization and meshes are the same of the previous section and the mixed boundary conditions are imposed, as can be seen from Figure~\ref{fig:domains}, by defining for the two benchmarks, respectively, the Dirichlet boundary as $\Gamma_D = \Gamma_\text{v}$ and $\Gamma_D = \Gamma_\text{c}$, the Neumann boundary as $\Gamma_N = \Gamma_\text{in} \cup \Gamma_\text{out}$ for both geometries, and the slip boundary as $\Gamma_G = \Gamma_\text{w} \cup \Gamma_\text{h}$ and $\Gamma_G = \Gamma_\text{w}$.
The velocity and pressure fields along with their corresponding source term 
$\mathbf{f}_{i, h}\in\mathcal{P}_{N, h}$ for $i=2$ are shown in Figure~\ref{fig:vpbackcube3} 
for the flow past a cube and backward step, top and bottom respectively.

\begin{figure}[tph!]
  \centering
  \includegraphics[width=0.49\textwidth, trim={0 40 0 105}, clip]{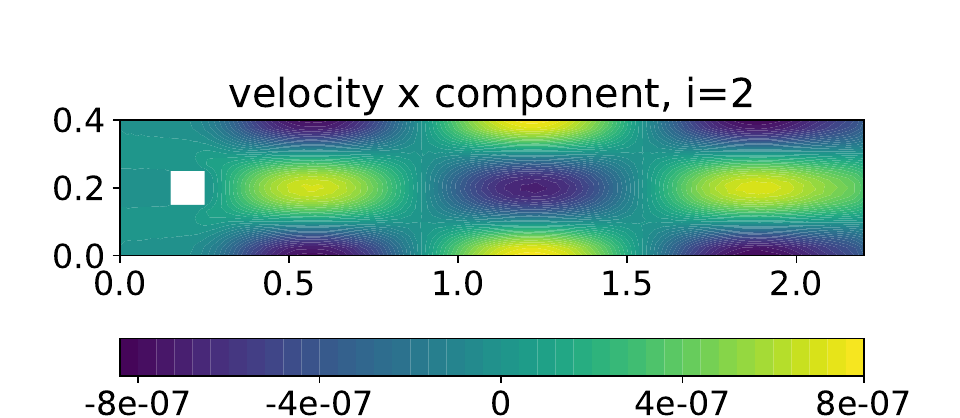}
  \includegraphics[width=0.49\textwidth, trim={0 40 0 105}, clip]{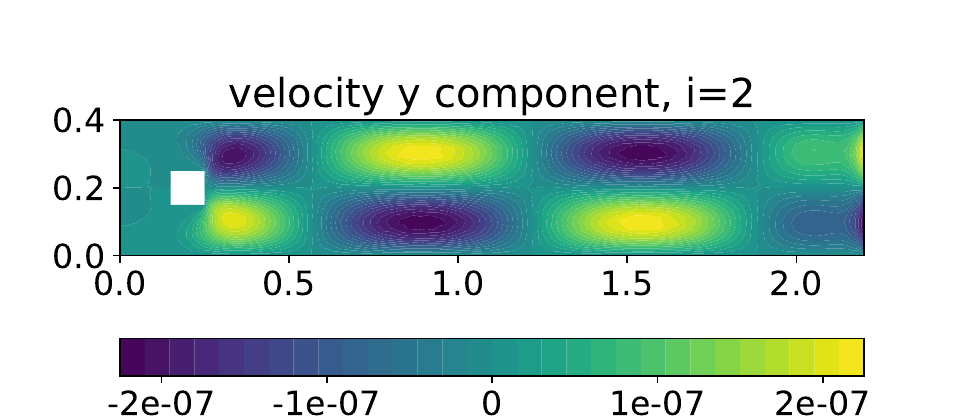}
  \includegraphics[width=0.49\textwidth, trim={0 40 0 105}, clip]{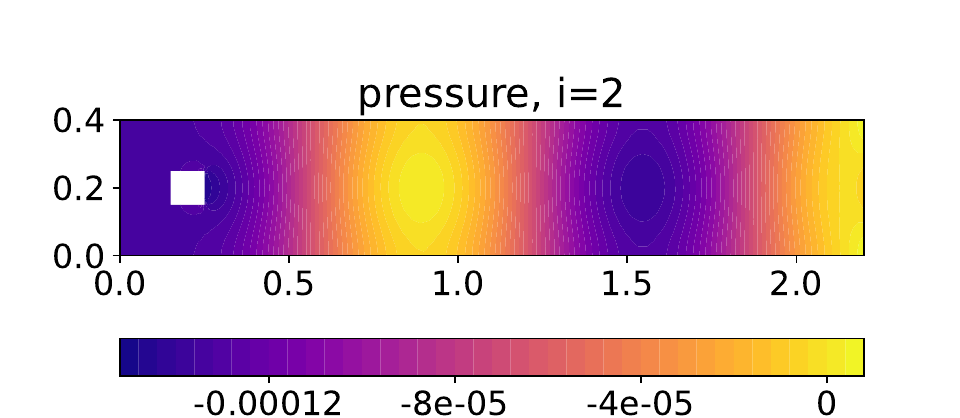}
  \includegraphics[width=0.49\textwidth, trim={0 40 0 105}, clip]{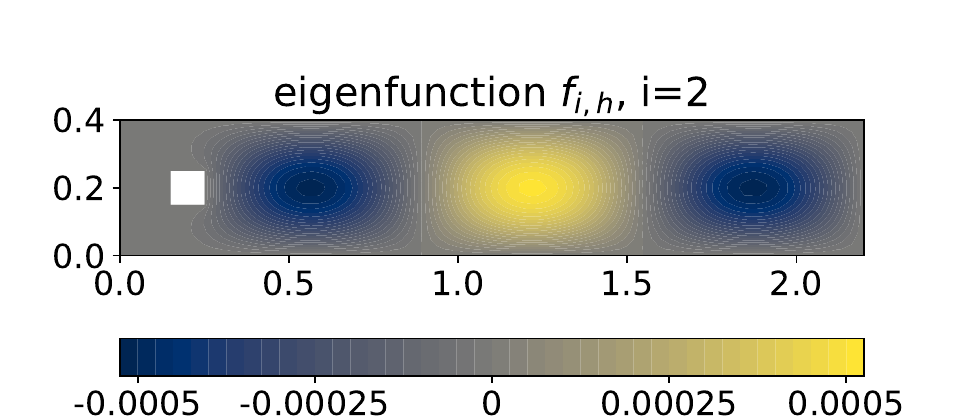}\\[0.5cm]
  \includegraphics[width=0.49\textwidth, trim={0 40 0 105}, clip]{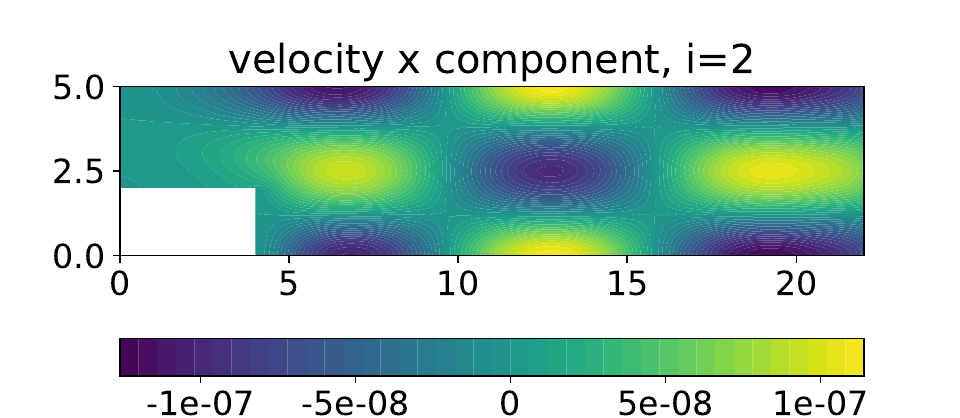}
  \includegraphics[width=0.49\textwidth, trim={0 40 0 105}, clip]{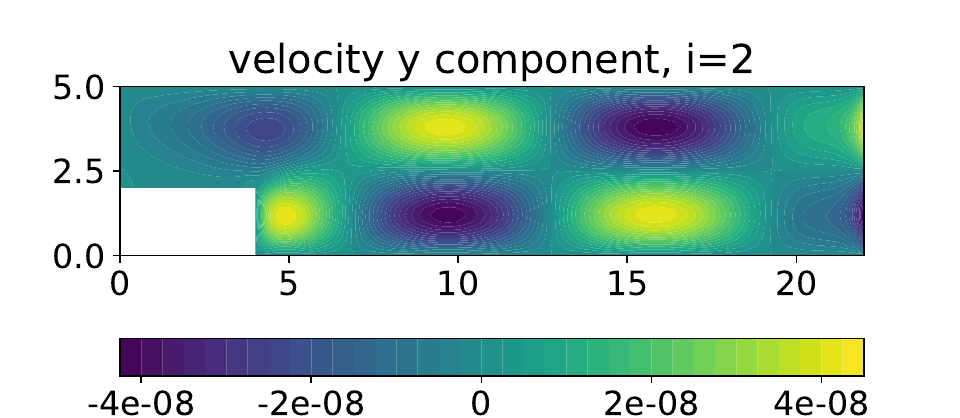}
  \includegraphics[width=0.49\textwidth, trim={0 40 0 105}, clip]{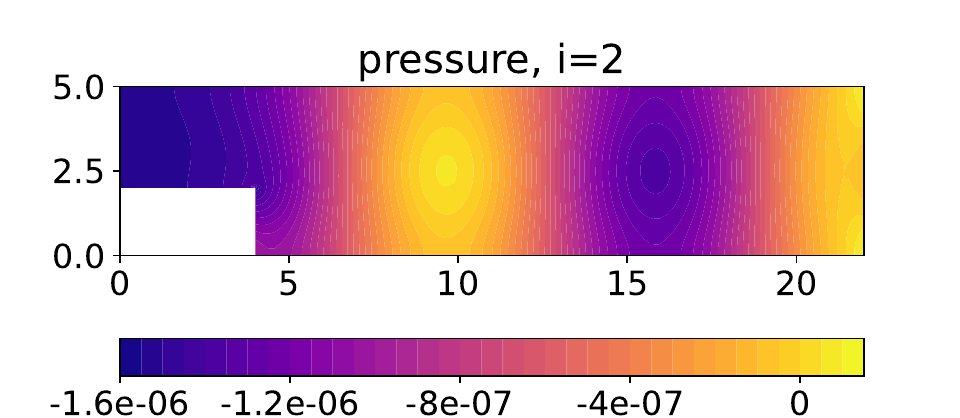}
  \includegraphics[width=0.49\textwidth, trim={0 40 0 105}, clip]{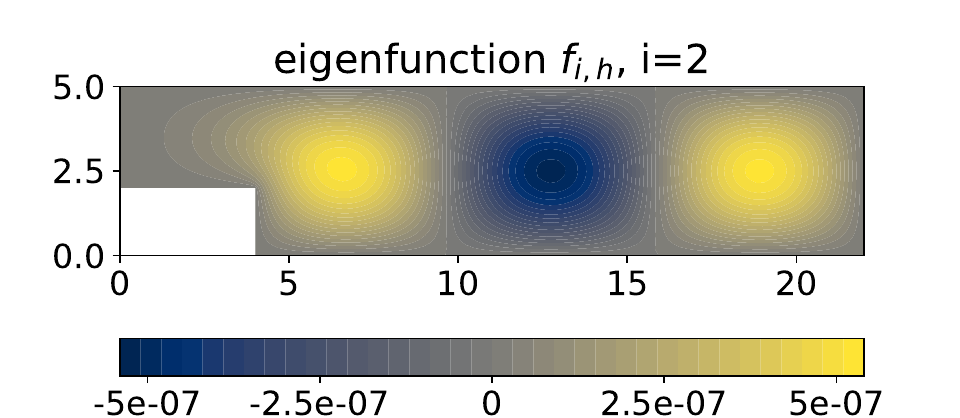}
  \caption{Snapshots of velocity and pressure fields along with corresponding eigenfunction $\mathbf{f}_{i,h}$ of the Dirichlet-Laplace operator, for the INS with mixed slip and homogeneous Dirichlet-Neumann boundary conditions:flow past a cube (top) and backward facing step (bottom).}
  \label{fig:vpbackcube3}
\end{figure}

We report the corresponding decay of the upper bounds on the velocity and pressure reconstruction errors for mixed slip and homogeneous Dirichlet-Neumann boundary conditions reported above. We show that, varying the amount of sampled snapshots $N\in\{250, 500, 750\}$, the KnWs are exhibiting the same exponential decay rate $a = 1/3$ w.r.t.\ the number of POD modes $n$ considered for the reconstruction, as compared to the high-fidelity reference solution, until we reach the self-convergence behaviour.



\begin{figure}[tph!]
  \centering
  \includegraphics[width=1\textwidth]{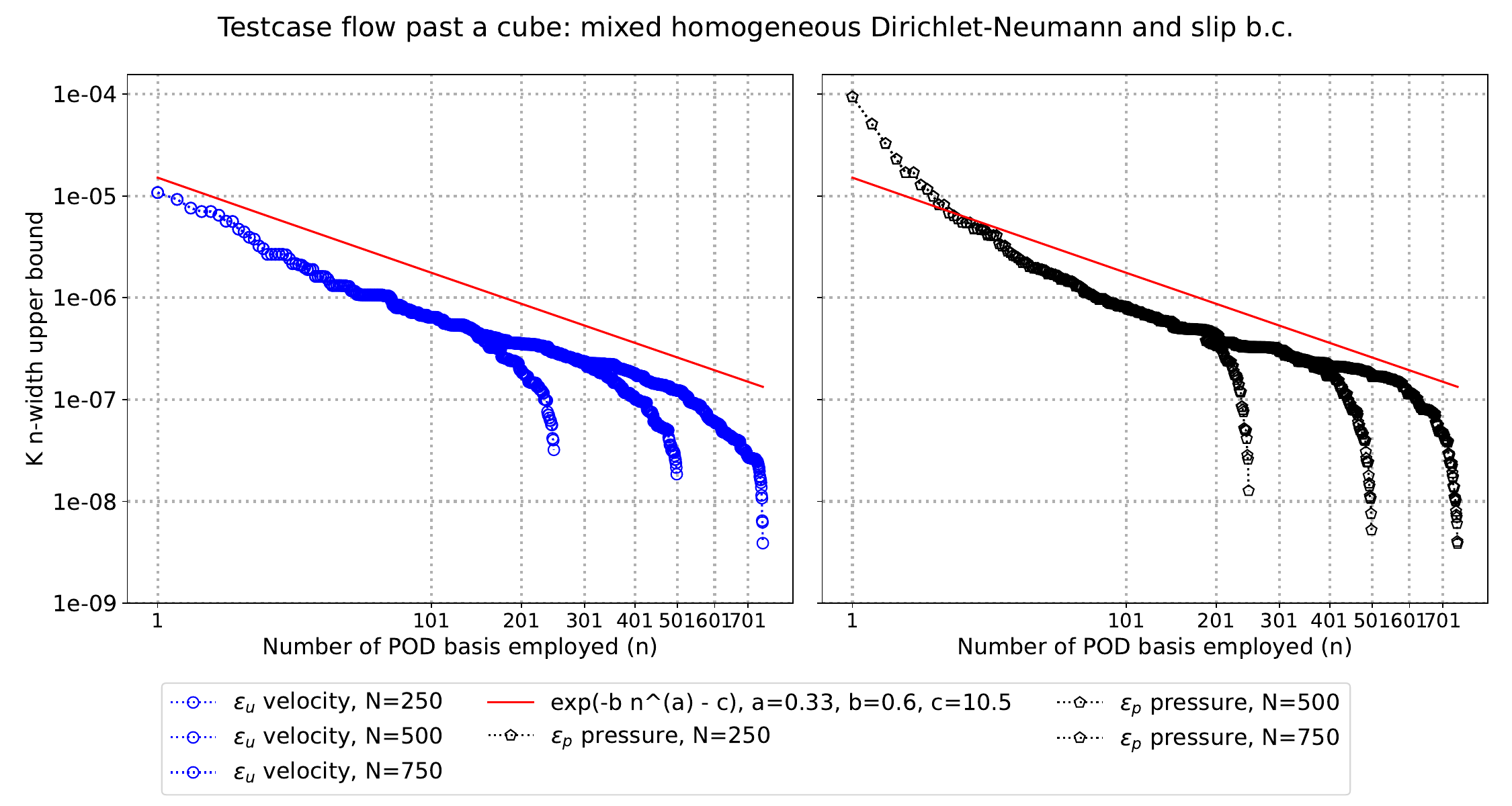}
  \includegraphics[width=1\textwidth]{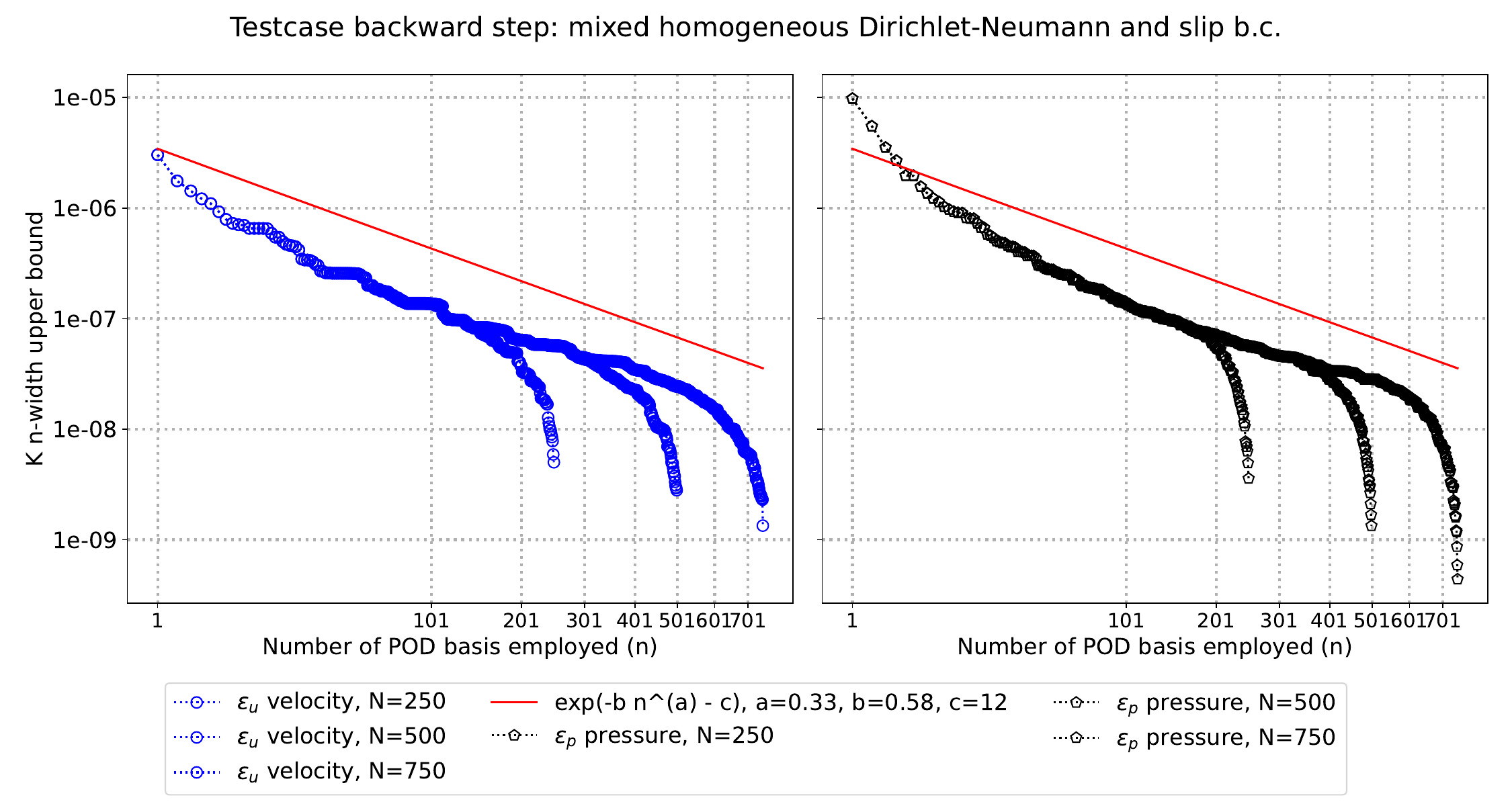}
  \caption{Numerically estimated upper bounds $\epsilon_u$ and $\epsilon_p$ of the Kolmogorov $n$-width for mixed slip and homogeneous Dirichlet-Neumann boundary conditions varying the amount of snapshots $N$: flow past a cube (top) and backward facing step (bottom). The asymptotic super-exponential behaviour is due to self-convergence to the high-fidelity numerical reference solution.}
  \label{fig:mixed_slip}
\end{figure}




\section{Conclusions and Perspectives}
\label{sec:conclusions}
In this work, we have addressed the model order reduction of the stationary, viscous, incompressible Navier-Stokes equations in a polygon $\Omega\subset \R^2$, subject to Assumption \ref{ass:Dirich} and to mixed boundary conditions and corner-weighted analytic in $\overline{\Omega}$ volume forcing. Leveraging analytic regularity results and a small data hypothesis, we established exponential bounds on the Kolmogorov $n$-widths of the solution sets in $H^1(\Omega)^2\times L^2(\Omega)$ (Theorem~\ref{thm:dnK}). 
The key idea of the proof was to use recent analytic regularity results in scales of corner-weighted Sobolev spaces of Kondrat'ev type, combined with exponential convergence rate bounds of suitable, so-called ``$hp$-Finite Element'' approximations of velocity and pressure. 
These results were used as comparisons for the optimal subspaces of dimension $n$, confirming that 
the solution manifold can be approximated with a convergence rate of the form $C \exp(-bn^{1/3})$.
This is highly relevant for reduced basis methodologies.

Our numerical experiments support the theoretical predictions, verifying the sharpness of these Kolmogorov $n$-widths of solution sets, for different geometries and boundary conditions. The reconstruction errors, evaluated for a finite-dimensional parameter space made of the eigenfunctions of the Laplace operator, exhibit the predicted exponential convergence rates with respect to the number of reduced modes, for mixed Dirichlet, Neumann and no-slip boundary conditions.
Here, convergence tests were performed w.r.t.\ a high-fidelity solution with a divergence-stable, low order velocity-pressure pair. 

The findings presented in this manuscript pave the way for further research in various directions, here we comment on some implications, and also indicate generalizations. 
While the basic, analytic regularity result Theorem~\ref{th:analytic-u}, holds only under the assumption of small data, we believe that corresponding analytic regularity, and exponential rates of $hp$-approximation remains valid also for moderately larger values of the Reynolds parameter, \emph{along regular branches of isolated solutions} in the sense of \cite{BRR}.
Furthermore, the regularity theory in \cite{HMS21_971} is formulated only for Newtonian fluids, i.e.\ for a linear stress-strain relation.
We expect similar results also for nonlinear, strongly monotone, analytic constitutive relations, again under a small data hypothesis.
Moreover, future work could explore extensions to three-dimensional computational domains, curved boundaries, and alternative boundary conditions. Our results might also benefit the development of machine learning surrogates combining linear compression strategies, such as POD, to perform dimensionality reduction and neural networks to learn the evolution of the latent coefficients.


\section*{Acknowledgements}
\noindent \textbf{FP} and \textbf{GR} acknowledge the support provided by the European Union - NextGenerationEU, in the framework of the iNEST - Interconnected Nord-Est Innovation Ecosystem (iNEST ECS00000043 - CUP G93C22000610007) consortium and its CC5 Young Researchers initiative.
The authors also like to acknowledge INdAM-GNCS for its support.

\bibliographystyle{abbrv}
\bibliography{biblio}


\end{document}